\documentclass[11pt,english,final]{amsart} % add final option to supress showkeys

\usepackage[margin=2.25cm]{geometry}
\usepackage{enumerate,enumitem}
\usepackage{graphicx,amsmath,amsthm,amssymb,mathrsfs}
\usepackage[dvipsnames]{xcolor}
\usepackage{twemojis}
\usepackage{setspace}
\usepackage[square,sort,comma,numbers]{natbib} 

\usepackage[colorlinks,linkcolor=black,citecolor=black]{hyperref} 
\usepackage{doi}
\usepackage[bordercolor=orange,backgroundcolor=orange!20,linecolor=orange,textsize=scriptsize,obeyFinal]{todonotes}
\usepackage[notcite,notref]{showkeys}  % remove color option if conflicts with tikz

\newtheorem{theorem}{Theorem}%[section]
\newtheorem{definition}[theorem]{Definition}

\newtheorem{lemma}[theorem]{Lemma}

\newtheorem{setup}[theorem]{Setup}

\setlist{itemsep=2pt,parsep=1pt,topsep=2pt,partopsep=0pt}  
\setenumerate{leftmargin=*,labelindent=\parindent} 

\def\itm#1{\rm ({#1})} 
\def\itmit#1{\itm{\it #1\,}} 
 
\def\abc{\itmit{\alph{*}}}

\def\step1#1{\itm{Step {\it #1\,}}}

\newcommand{\By}[2]{\overset{\mbox{\tiny{#1}}}{#2}}
\newcommand{\ByRef}[2]{   \By{\eqref{#1}}{#2} }

\newcommand{\leByRef}[1]{ \ByRef{#1}{\le} }

\renewcommand{\subset}{\subseteq}
\newcommand{\im}{\mathrm{Im\,}}
\newcommand{\dcup}{\dot\cup}
\newcommand{\eps}{\ensuremath{\varepsilon}} 
\renewcommand{\rho}{\varrho}
\renewcommand{\phi}{\varphi}
\newcommand{\Prob}{\mathbb{P}}
\newcommand{\Exp}{\mathbb{E}}

\newcommand{\cB}{\ensuremath{\mathcal{B}}}

\newcommand{\cE}{\ensuremath{\mathcal{E}}}
\newcommand{\cF}{\ensuremath{\mathcal{F}}}
\newcommand{\cX}{\ensuremath{\mathcal{X}}}
\newcommand{\cY}{\ensuremath{\mathcal{Y}}}

\DeclareMathOperator{\dom}{dom}

\author[P. Allen]{Peter Allen}
\address{(PA,JB,JS) London School of Economics, Department of Mathematics, Houghton
  Street, London WC2A 2AE, UK}
\email{p.d.allen|j.boettcher|j.skokan@lse.ac.uk}
\author[J. B\"ottcher]{Julia B\"ottcher}
\author[J. Skokan]{Jozef Skokan}
\author[B. Sudakov]{Benny Sudakov}
\address{(BS) Department of Mathematics, ETH, Ramistrasse 101, 8092 Zurich, Switzerland}
\email{benjamin.sudakov@math.ethz.ch}
\thanks{(BS) Research supported in part by SNSF grant 200021-19696.}

\title{Breaking the Bollob\'as--Eldridge--Catlin Barrier for Bipartite Graphs}
\date{}

\begin{document}

\begin{abstract}
The celebrated Bollob\'as--Eldridge--Catlin packing conjecture states
that every $n$-vertex graph $G$ with minimum degree at least
$\big(1-\frac{1}{\Delta+1}\big) n$ contains every $n$-vertex graph $H$ of
maximum degree at most $\Delta$. Despite considerable attention, the
conjecture remains widely open.

We show that for bipartite $H$ this threshold can be greatly improved:
there is an absolute constant $c>0$ such that every $n$-vertex graph
$G$ with minimum degree at least
$
\big(1-c\frac{\log\Delta}{\Delta}\big)n
$
contains every $n$-vertex bipartite graph $H$ of maximum degree at most
$\Delta$, provided $\Delta$ is not too large compared to $n$. Moreover,
we prove that this logarithmic improvement is best possible up to the
value of the constant.
\end{abstract}

\maketitle

\thispagestyle{empty}
\section{Introduction}
A central theme in extremal combinatorics is to determine how large the
minimum degree of an $n$-vertex graph $G$ must be in order to contain a given $n$-vertex graph $H$. A classical early example
is Dirac's theorem~\cite{Dirac}, which says that, for $n\ge 3$,
minimum degree $n/2$ guarantees a Hamilton cycle. Another celebrated
result of this kind is the Hajnal--Szemer\'edi theorem~\cite{HS}, which
says that, for every $\Delta\ge2$, minimum degree
$\frac{\Delta}{\Delta+1}n$ guarantees a spanning $K_{\Delta+1}$-factor
whenever $\Delta+1$ divides $n$.

A fundamental conjecture of Bollob\'as and Eldridge~\cite{BE} and,
independently, Catlin~\cite{C1,C2} from 1970's,
generalises the Hajnal--Szemer\'edi
theorem from clique-factors to arbitrary bounded-degree spanning
graphs. 
It states that any two $n$-vertex graphs $G_1$ and $G_2$ pack, that is,
they admit edge-disjoint copies on the same vertex set, whenever
$
(\Delta(G_1)+1)(\Delta(G_2)+1)\le n+1.$
Taking $G_1=H$ and $G_2=\overline G$, this implies the following
minimum-degree formulation. Every $n$-vertex graph $G$ with minimum
degree at least $\frac{\Delta}{\Delta+1}n$ should contain every
$n$-vertex graph $H$ with maximum degree at most $\Delta$. Despite a lot of attention, the
conjecture remains open, though it was proved for $\Delta=2$ by Aigner
and Brandt~\cite{AB}, for $\Delta=3$ and for sufficiently large $n$, by Csaba, Shokoufandeh and Szemer\'edi~\cite{CSS}, and for $\Delta=4$ and sufficiently large $n$, by Csaba~\cite{csabaphd}. Somewhat weaker
results are known in general. Sauer and Spencer~\cite{SauSpe}
proved that a minimum degree greater than
$\big(1-\frac{1}{2\Delta}\big)n$ suffices for all $n$-vertex graphs of
maximum degree at most $\Delta$, and Kostochka, Kaul and Yu~\cite{KKY}
improved this to $\big(1-\frac{3}{5(\Delta+1)}\big)n$ for
$\Delta\ge300$. An approximate form of the conjecture was established by Eaton~\cite{eaton}. She proved that if  $(\Delta(G_1)+1)(\Delta(G_2)+1)\le n+1$, then the graphs $G_1, G_2$ admit a near packing in which the graph formed by the conflicting edges has maximum degree at most one. 

Further results exploit additional sparsity of one of the graphs. For example, Bollob\'as, Kostochka and Nakprasit~\cite{BKNdegenerate} obtained stronger packing conditions when one graph has small degeneracy. Kaul and Reiniger~\cite{KaulReiniger} later gave a degree-sequence refinement of the Sauer--Spencer theorem for degenerate graphs. Finally, Cames van Batenburg and Kang~\cite{CamesKang} proved the Bollob\'as--Eldridge--Catlin conjecture in an asymmetric regime when one graph has bounded codegree.

In this paper, we consider the bipartite case of this problem, that is, we restrict our attention to the containment of bipartite $n$-vertex
graphs $H$ with $\Delta(H)\le\Delta$. This setting was first considered
by Csaba~\cite{bela07}, who proved that the
Bollob\'as--Eldridge--Catlin conjecture holds for bipartite graphs. In
fact, he showed the stronger statement that minimum degree
$(1-\beta)\frac{\Delta}{\Delta+1}n$ already suffices to contain every
bipartite $n$-vertex graph $H$ with maximum degree at most $\Delta$.
However, Csaba's proof gives only a very small value of $\beta=\beta(\Delta)$,
with $\beta(\Delta)\to0$ as $\Delta\to\infty$. Moreover, it relies on
the Szemer\'edi Regularity Lemma, and therefore applies only when
$\Delta$ grows very slowly with $n$, roughly like $\log_* n$.

The aim of this paper is to determine the order of magnitude of the
best possible improvement over the Bollob\'as--Eldridge--Catlin
threshold for bipartite graphs. Our main theorems resolve this question, giving matching upper and lower bounds as follows.

\begin{theorem}[upper bound]\label{thm:upper-bound}
  For all sufficiently large $n$ and each $2\cdot e^{1000}\le\Delta\le 0.001{n^{1/5}}$, the following holds. Every $n$-vertex graph~$G$ with minimum degree 
  \[\delta(G)\ge\Big(1-\frac{\log\Delta}{500\Delta}\Big)n\,,\]
  contains each
  $n$-vertex bipartite graph~$H$ with maximum degree at most~$\Delta$.
\end{theorem}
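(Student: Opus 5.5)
We work in the complement $\overline G$, which has maximum degree at most $d:=\frac{\log\Delta}{500\Delta}n$. We must find a bijection $\phi\colon V(H)\to V(G)$ such that no edge of $H$ is mapped onto an edge of $\overline G$; call such an edge a \emph{conflict}. Fix a bipartition $V(H)=A\cup B$. The approach is a randomised two-round embedding in which the bipartiteness of $H$ is used twice: first to place the vertices of $A$ cheaply, and then to reduce the placement of $B$ to a perfect-matching problem in an auxiliary bipartite graph.

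\textbf{Step 1: placing $A$.} Choose a uniformly random set $X\subset V(G)$ of size $|A|$ and a uniformly random bijection $\phi_A\colon A\to X$. Since $A$ is independent in $H$, no conflicts arise within $A$. For $b\in B$ with $N_H(b)=\{a_1,\dots,a_k\}$, $k\le\Delta$, define the set of \emph{admissible images} $C(b)=V(G)\setminus X\setminus\bigcup_i N_{\overline G}(\phi(a_i))$. Each vertex $v\notin X$ fails to be admissible for $b$ with probability roughly $1-(1-d/n)^{k}$, which is about $\frac{\log\Delta}{500}$ in the worst case. Thus $\mathbb{E}|C(b)|\ge(1-p)|V\setminus X|$ with $p\le\Delta d/n=\frac{\log\Delta}{500}$. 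This bound is not small. The real gain has to come from the fact that a vertex $v$ is forbidden for $b$ only when some $\phi(a_i)$ lies among the at most $d$ non-neighbours of $v$. Consequently $v$ is admissible for $b$ with probability about $\exp(-\Delta d/n)=\Delta^{-1/500}$. So each $b$ keeps a $\Delta^{-1/500}$-fraction of the free vertices, which is far more than the $\frac1{\Delta}$-fraction that appears in the Bollobás--Eldridge--Catlin barrier. Concentration is handled with McDiarmid or Talagrand inequalities, using $\Delta\le 0.001n^{1/5}$ to afford union bounds over all $b$ and $v$.

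\textbf{Step 2: placing $B$ via Hall's theorem.} Form the auxiliary bipartite graph $\mathcal{B}$ between $B$ and $Y:=V(G)\setminus X$, joining $b$ to $v$ when $v\in C(b)$. A perfect matching in $\mathcal{B}$ gives the required embedding, because $B$ is independent in $H$. It therefore suffices to verify Hall's condition. We need degree lower bounds on both sides: each $b$ has about $\Delta^{-1/500}|Y|$ neighbours, and symmetrically each $v\in Y$ is admissible for about $\Delta^{-1/500}|B|$ vertices $b$. We also need an expansion or codegree property so that small sets $S\subset B$ have $|N(S)|\ge|S|$ and large sets satisfy the dual condition. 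For sets of intermediate size we use the pseudo-randomness of $\phi_A$. The number of pairs $(S,T)$ with no $\mathcal B$-edges is controlled by the fact that the events $\{v\notin C(b)\}$ for different $b$ depend on disjoint sets $N_H(b)$ only through shared neighbours. Since $H$ has maximum degree $\Delta$, each $b$ shares neighbours with at most $\Delta^2$ other vertices of $B$, which gives enough independence for Janson-type estimates.

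\textbf{Main obstacle.} The hardest part will be the Hall verification for medium-sized sets together with worst-case $\overline G$. When $\overline G$ is clustered, for instance a union of cliques of size $d$, the sets $C(b)$ become highly correlated: a vertex $v$ is admissible for $b$ exactly when the clique containing $v$ avoids $\phi(N_H(b))$. In this case Hall's condition can fail for structural reasons, for example when $X$ saturates a clique. To deal with this, I would first attempt an iterative scheme. Embed $B$ greedily in random order via a random matching process, or a nibble, leaving a small set of leftover vertices. Then repair the leftover with switchings. A leftover vertex $b$ is exchanged with an already embedded $b'$ whenever both exchanges create no conflict. The abundance of admissible images, at least $\Delta^{-1/500}n$ per vertex, makes the number of such switches large compared to the number of leftover vertices. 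It is precisely here that the constant $500$ and the condition $\Delta\ge 2e^{1000}$ are needed, to keep $\Delta^{-1/500}$ bounded away from the losses incurred during repair.
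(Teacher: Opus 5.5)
Your framework (random split of $V(G)$, a uniformly random embedding of one colour class, then Hall's theorem in the auxiliary graph of admissible images) matches the paper's, but there is a genuine gap: two essential steps are missing.

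\textbf{Vertices with too few admissible images.} You claim that concentration plus a union bound over all $b\in B$ gives every $b$ about a $\Delta^{-1/500}$-fraction of admissible images. For a single $b$, bounded differences (or the paper's Lemma~\ref{lem:main}) only give a failure probability $\exp(-\Delta^{\Theta(1)})$, and nothing much better is true. Take your own example, with $\overline G$ a disjoint union of cliques of size $d$. Then $G$ is complete multipartite with $n/d\approx 500\Delta/\log\Delta<\Delta$ parts. We have $C(b)=\emptyset$ as soon as $\phi(N_H(b))$ meets every part, and this event has probability bounded below by a positive function of $\Delta$ alone. The theorem allows $\Delta$ to be the constant $2e^{1000}$, so for $\Delta$-regular $H$ there are, with high probability, linearly many $b$ with $C(b)=\emptyset$. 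The hypothesis $\Delta\le 0.001n^{1/5}$ does not help here. A union bound over $b$ needs $\Delta$ large compared to $\log n$, which is exactly the paper's case distinction at $\Delta=(\log 2n)^3$. Your repair step cannot help either. Switching $B$-vertices never creates an admissible image for a $b$ with $C(b)=\emptyset$; the images of its neighbours in $A$ must be moved.

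The paper does this in Stages~2 and~3. After Stage~1, Markov's inequality shows that only a $2\exp(-\Delta^{1/3})$-fraction of $Y$ is problematic. A reservoir $\tilde X$ of $\lceil |X|/\Delta^3\rceil$ vertices at pairwise distance at least~$4$ has been withheld, so a large pool of free vertices with good degrees remains (Lemma~\ref{lem:stage1}\ref{lem:stage1:d}). The set $X'$ of neighbours of problematic vertices is re-embedded uniformly into this pool. The local lemma then shows that with positive probability no vertex is $\Delta^{2/3}$-problematic afterwards. Here Lemma~\ref{lem:main} supplies the conditional bound $p=\exp(-\Delta^{1/3})$, and the dependency graph has maximum degree at most $\Delta^2$. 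Finally the reservoir is embedded arbitrarily. This costs each $y$ at most one neighbour, hence at most $2c\frac{\log\Delta}{\Delta}|V|$ admissible images.

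\textbf{Hall's condition for intermediate sets.} You flag this but do not resolve it. A union bound over pairs $(S,T)$ of linear size cannot work, because there are $e^{\Theta(n)}$ such pairs. The independence you identify is about $|S|/\Delta^2$ vertices of $S$ with pairwise disjoint neighbourhoods, each failing with probability $\exp(-\Delta^{\Theta(1)})$ at best. This gives per-pair bounds whose exponent is smaller than $n$ by a factor polynomial in $\Delta$.

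The paper's way out is to union bound only over sets $V'\subseteq V$ of the much smaller size $\lceil\frac12|V|\Delta^{-5/2}\rceil$ satisfying the inherited condition $\delta_G(U,V')\ge(1-8c(\log\Delta)\Delta^{-2/3})|V'|$. It shows that each such $V'$ has an $F$-neighbour of all but $|Y|/(2\Delta^{2/3})$ vertices of $Y$ (Lemma~\ref{lem:stage3}\ref{lem:stage3:c}). For a fixed $V'$, one takes $t=\lceil\frac14|Y|\Delta^{-8/3}\rceil$ vertices of $Y$ with pairwise disjoint neighbourhoods and exposes the images of these neighbourhoods one at a time. At each step one applies Lemma~\ref{lem:main} to $G[U\setminus Z_{i-1},V']$. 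This is where the lemma's asymmetric hypothesis matters, since vertices of $U$ may miss up to an $8c(\log\Delta)\Delta^{-2/3}$-fraction of $V'$. The resulting failure probability $\exp(-\Delta^{1/3}t)$ beats the at most $\binom{n}{t}\binom{n}{4\Delta^{1/6}t}$ choices.

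Every $W\subseteq V$ with $\frac{|V|}{2\Delta^{1/3}}<|W|<(1-\frac{1}{2\Delta^{2/3}})|V|$ contains such a $V'$; a random subset of $W$ works. Smaller or larger $W$ are handled by the minimum degrees $|Y|/(2\Delta^{1/3})$ and $|V|/(2\Delta^{2/3})$ on the two sides of $F$. Without this reduction, or something equivalent, your Step~2 is not a proof.
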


\begin{theorem}[lower bound]\label{thm:lower-bound}
  For each $\Delta\ge 2$ and each $n\ge\Delta+1$, there is an $n$-vertex bipartite graph~$H$ with maximum degree at most~$\Delta$ and an $n$-vertex graph~$G$ with minimum degree \[\delta(G)\ge\Big(1-164\frac{\log\Delta}{\Delta}\Big)n\] such that~$G$ does not contain~$H$.
\end{theorem}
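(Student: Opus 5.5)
The plan is an explicit construction in which $G$ is a complete multipartite graph, so that $G\supseteq H$ becomes a statement about colourings of $H$, and $H$ is a random bipartite graph with strong expansion. I have not found a divisibility count that closes the argument, and I expect the main work to lie there (the third paragraph).

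I would take $\overline G$ to be a disjoint union of cliques of size about $s=\lceil C\, n\log\Delta/\Delta\rceil$, with $C\le 164$. Then $\delta(G)\ge n-s\ge (1-164\frac{\log\Delta}{\Delta})n$ once the rounding is absorbed. A copy of $H$ in $G$ is exactly a partition of $V(H)$ into independent sets whose sizes equal the prescribed clique sizes. It therefore suffices to build a bipartite $H$ with $\Delta(H)\le\Delta$ admitting no such partition. For small $\Delta$ (where $164\log\Delta/\Delta\ge 1$) and for $n$ close to $\Delta$, the bound is trivial or nearly so, e.g.\ $G$ empty-ish or $H=K_{\Delta,\Delta}$ plus isolated vertices. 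So I would treat those ranges separately and assume $\Delta$ is large and $n\ge C'\Delta$.

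For $H$ I would take a random bipartite graph on sides $A,B$, for instance a union of $\Delta$ random perfect matchings, or a random $\Delta$-regular bipartite graph with suitably sized sides. The key lemma is the standard first-moment expansion bound. With positive probability, for all $X\subseteq A$ and $Y\subseteq B$ with $|X|,|Y|\ge t:=c\,n\log\Delta/\Delta$ there is an edge between $X$ and $Y$. The proof is a union bound: there are at most $\binom{n}{t}^2\le (en/t)^{2t}$ pairs, and each spans no edge with probability roughly $(1-t/n)^{\Delta t}\le e^{-\Delta t^2/n}$. This is $o(1)$ when $\Delta t/n\gg 2\log(en/t)\approx 2\log\Delta$, which fixes $c$ as a moderate constant. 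Consequently every independent set of $H$ is "one-sided up to $t$", meaning it meets $A$ or $B$ in fewer than $t$ vertices.

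The main obstacle is turning one-sidedness into a contradiction. Each part of the partition is either $A$-heavy (at least $s-t$ vertices from $A$) or $B$-heavy. Summing over the $k\approx n/s$ parts, $|A|$ must lie in $\bigcup_j\,[\,j(s-t),\,js+(k-j)t\,]$. The naive count fails, because the light contributions $(k-j)t$ can add up to order $n\log\Delta/\Delta\cdot k$, which swamps the gaps. I would first try to strengthen the light-side bound globally. The union $L$ of the $A$-portions of all $B$-heavy parts, together with the union of the $B$-heavy parts' $B$-portions, cannot both be large, again by the expansion lemma applied to the aggregated sets. That is not literally valid, since these unions need not be independent, but edges only run inside parts. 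So I would instead apply expansion to each part's light portion paired against its own heavy portion, and count edges: a part with $a$ vertices of $A$ and $s-a$ of $B$ is independent only if $a(s-a)\lesssim n\log\Delta/\Delta\cdot a$-type bounds hold, which gives $\sum(\text{light})\le$ a quantity much smaller than $s$. I would then choose $|A|$ with $|A|\equiv s/2 \pmod s$, e.g.\ $|A|=\lfloor k/2\rfloor s+s/2$ with degrees on $A$ and $B$ adjusted to stay at most $\Delta$. With this choice no admissible $j$ exists, so $H\not\subseteq G$. If this global count cannot be made to work, the fallback is to take $H$ as a disjoint union of many small random bipartite components and to run the same parity argument component by component.
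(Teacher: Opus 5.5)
\textbf{There is a genuine gap: the host graph you chose cannot work.} Your $H$ is fine; it is exactly the paper's Lemma~\ref{lem:bi-expander}. But with $\overline G$ a disjoint union of cliques, the claim $H\not\subseteq G$ is false, so no count can close the argument.

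The step that fails is your claim that independence of the parts forces the total light mass to be much smaller than $s$. The expansion lemma only forbids parts in which \emph{both} sides have at least $t$ vertices. Bounded degree makes small light portions free. A part with $a$ vertices of $A$ and $s-a$ of $B$ is independent as soon as its $B$-vertices avoid the at most $a\Delta$ neighbours of its $A$-vertices. That is possible for every $a$ up to order $n/\Delta$.

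You have $k\approx\Delta/(C\log\Delta)\gg\sqrt{\Delta}$ parts. So you can spread the surplus $s/2$ of $A$ over the roughly $k/2$ $B$-heavy parts, giving each about $s/k=O(n\log^2\Delta/\Delta^2)\ll n/\Delta$ light vertices. Choose the light set $L\subseteq A$ (randomly, with the local lemma) so that every $b\in B$ has only $O(\log\Delta)$ neighbours in $L$. Then:
\begin{itemize}
\item each $b$ is barred from only $O(\log\Delta)$ of the $k/2$ $B$-heavy parts;
\item each part bars at most $\Delta s/k\ll n$ vertices of $B$.
\end{itemize}
Hall's condition for distributing $B$ among the remaining capacities follows. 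So $H$ embeds whatever the residue of $|A|$ modulo $s$.

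Your fallback fails even more easily, because one part may take $A$-vertices of one component and $B$-vertices of another. For example, disjoint copies of $K_{\Delta,\Delta}$ embed into any complete multipartite graph with parts of size at most $n/2-\Delta$: list all $A$-sides, then all $B$-sides, and cut into consecutive blocks.

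\textbf{The missing idea is that the non-edges of $G$ must also run \emph{between} clusters, in an expanding pattern.} Once they do, no divisibility argument is needed. The paper's construction is as follows.
\begin{itemize}
\item Set $m=\Delta/(4\log\Delta)$ and let $F$ be an $m$-vertex graph with $\Delta(F)\le 40$ in which any two disjoint sets of at least $m/4$ vertices span an edge. This is Lemma~\ref{lem:expander}, obtained from Lemma~\ref{lem:bi-expander} by identifying $x_i$ with $y_i$.
\item Take $G$ to be the $\lceil n/m\rceil$-blow-up of $\overline F$, so $\delta(G)\ge n-41n/m$.
\item Given an embedding of $H$, where $H$ has no bipartite hole of size $n/(3m)$, label each cluster by the side of $H$ supplying at least half of its vertices.
\item Clusters are independent in $G$, so each cluster is in fact at least $2/3$ filled by its label. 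Hence at most $3m/4$ clusters carry each label, and so at least $m/4$ clusters carry each label.
\item The property of $F$ gives an $X$-labelled cluster and a $Y$-labelled cluster that are adjacent in $F$, i.e.\ with no $G$-edges between them. The $X$-vertices of the first and the $Y$-vertices of the second form a bipartite hole of size at least $2n/(3m)$ in $H$, a contradiction.
\end{itemize}
This construction works uniformly for all $n\ge\Delta+1$; only $\Delta\le 200$ needs the trivial edgeless $G$. So your separate treatment of $n$ close to $\Delta$ is also unnecessary.
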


\noindent
We made no attempt to optimise the constants in these theorems, since 
the point of these results is to determine the correct order of magnitude of
the improvement over the Bollob\'as--Eldridge--Catlin threshold for
bipartite graphs.

\subsection*{Outline of the proofs}
Let us briefly describe the ideas behind the proofs. For the lower bound, we construct a bipartite graph $H$ with no
bipartite holes of size $\Theta(\frac{\log\Delta}{\Delta}n)$, that is, no pair of disjoint sets of this size without any edges between them. As host graph~$G$ we take a $\Theta(\frac{\log\Delta}{\Delta}n)$-blow-up of the complement of a graph with constant maximum degree and the following mild expansion property: there is an edge between any pair of disjoint sets with at least a quarter of the vertices. We call the blow-ups of single vertices in this construction a cluster of~$G$. We then show that any attempted embedding of $H$ into $G$ forces many vertices from the two partition classes of $H$ into two clusters of $G$ with no edges between them.
This creates a large bipartite
hole in $H$, a contradiction. This shows that the logarithmic gain over the Bollob\'as-Eldridge-Catlin threshold in our upper bound is best possible. 

The upper bound is
proved by a random embedding argument, rather than by the regularity
lemma. Given a bipartite graph $H$ with parts $X$ and $Y$, we first
randomly split the vertex set of $G$ into two parts $U$ and $V$ of the
corresponding sizes (see Lemma~\ref{lem:makebip}). Our aim will now be to embed $H$ into the bipartite subgraph $G[U,V]$ of $G$, which with high probability inherits the high minimum degree of $G$. We give a randomised embedding of $X$ into $U$ and with high probability can complete this embedding by embedding $Y$ into $V$. This completion step is equivalent to verifying Hall's condition in an auxiliary bipartite graph with parts $Y$ and $V$ whose edges encode common neighbourhoods in $G$, so that the difficulty is to find a good randomised embedding of $X$ into $U$ (we will show that Lemma~\ref{lem:stage3} implies that Hall's condition holds in this auxiliary graph).

The first idea is simply to pick a uniform random embedding of $X$ into $U$, and if $\Delta$ is large enough (at least polylogarithmic in $n$) this turns out to suffice. If $\Delta$ is smaller, the uniform random embedding almost works, but a few errors might be made, corresponding to vertices of $Y$ with small or zero degree in the auxiliary bipartite graph. We fix these errors by a random re-embedding of the problematic vertices, using a version of the Lov\'asz local lemma and a reservoir of carefully chosen set-aside vertices of $X$ (this is captured in Lemma~\ref{lem:stage2}).

A main probabilistic
ingredient to our proof is that, in a bipartite graph with high minimum degree, a uniform random $\Delta$-set is very likely to have a large common neighbourhood. We show, using a martingale concentration argument, that  this holds with a failure probability that is exponentially small in $\Delta$ (see Lemma~\ref{lem:main}), which is sharp.

Since our proof
does not use regularity, it applies when $\Delta$ grows much faster with
$n$ than in Csaba's theorem. The restriction $\Delta\le 0.001{n^{1/5}}$
for our upper bound is not intrinsic to the problem: our method can
push this range somewhat further, at the cost of worse constants. On the
other hand, some restriction on the growth of $\Delta$ is necessary, as random graphs satisfying the required minimum degree condition
need not contain even a copy of $K_{\Delta,\Delta}$ when $\Delta$ is too
large.

\subsection*{Organisation} In Section~\ref{sec:tools} we introduce the notation we shall use in the remainder of the paper as well as some probabilistic tools. In Section~\ref{sec:lower} we prove Theorem~\ref{thm:lower-bound}. In Section~\ref{sec:upper}
we prove Theorem~\ref{thm:upper-bound} modulo our main technical lemma (Lemma~\ref{lem:main}) and some lemmas analysing the different stages of our embedding algorithm (Lemmas~\ref{lem:stage3}--\ref{lem:stage2}). 
The former is then proved in Section~\ref{sec:lemma}, and the latter in Section~\ref{sec:stages}. Section~\ref{sec:concl} closes with some concluding remarks. Throughout the paper, we will often omit floor and ceiling signs whenever they are not crucial, in order to improve readability.

\section{Notation and tools}\label{sec:tools}

Unless stated otherwise, our logarithms use base~$e$.
For a graph~$G$, we write $V(G)$ and $E(G)$ for the vertex set and the edges set of~$G$, respectively. For disjoint set $U,V\subset V(G)$ and a vertex $u\in V(G)$ we write $N_G(u,V)$ for the neighbourhood of~$u$ in~$V$ and 
\[\delta_G(U,V)=\min\big\{|N_G(u,V)|\colon u\in U\big\}\]
for the minimum degree to~$V$ among vertices in~$U$. Further, we write $N_G(U)$ for the \emph{common neighbourhood} of~$U$. We shall use the following logarithm estimate:
\begin{equation}\label{eq:log}
0\le -\log(1-x)\le 2x \qquad
    \text{if} \quad 
    0\le x\le\frac12 
\end{equation}

We will use the following form of Chernoff's inequality for binomially and hypergeometrically distributed random variables.
Recall that the \emph{hypergeometric distribution} with parameters $(n,\ell,h)$ is the number of elements from $[\ell]:=\{1,\dots,\ell\}$ selected when a subset of $[n]$ of size $h$ is picked uniformly at random.

\begin{lemma}[Corollary~2.3 and Theorem~2.10 of~\cite{purple-book}]\label{lem:hypgeo}
    Given positive integers $n,\ell,h$, if $X$ is hypergeometrically distributed with parameters $(n,\ell,h)$ then $\Exp[X]=\frac{\ell h}{n}$. If $X$ is either hypergeometrically or binomially distributed, then for each $\xi\in(0,\tfrac32)$ we have
    \[\Prob\big[|X-\Exp[X]|>\xi\Exp[X]\big]<2\exp\big(-\tfrac13\xi^2\Exp[X]\big)\,.\]
\end{lemma}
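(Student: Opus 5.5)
The statement is a standard concentration result quoted from Janson, \L uczak and Ruci\'nski, so I would prove it by reduction to the textbook Chernoff bounds rather than from scratch. First, the expectation. Write the hypergeometric variable as $X=\sum_{i=1}^{\ell}\mathbf{1}[i\in S]$, where $S$ is the uniformly random $h$-subset of $[n]$. Each $i$ lies in $S$ with probability $h/n$, so linearity of expectation gives $\Exp[X]=\ell h/n$.

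For the binomial case $X\sim\mathrm{Bin}(m,p)$ with $\mu=\Exp[X]$, I would use the exponential moment method. Markov's inequality applied to $e^{tX}$, together with $\Exp[e^{tX}]=(1-p+pe^t)^m\le e^{\mu(e^t-1)}$, yields
\[
\Prob[X\ge(1+\xi)\mu]\le\exp\big(-\mu\,\phi(\xi)\big),\qquad \phi(x)=(1+x)\log(1+x)-x.
\]
The same argument with $t<0$ gives $\Prob[X\le(1-\xi)\mu]\le\exp(-\mu\,\phi(-\xi))$, and one checks that $\phi(-\xi)\ge\xi^2/2$. It then remains to verify the elementary inequality $\phi(\xi)\ge\xi^2/3$ on $(0,\tfrac32)$. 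The standard route is $\phi(x)\ge \frac{x^2}{2(1+x/3)}$, and at $x\le\tfrac32$ the right-hand side is at least $\frac{x^2}{3}$ exactly, because $2(1+\tfrac12)=3$. This explains the threshold $\tfrac32$. Adding the two tails gives the bound $2\exp(-\xi^2\mu/3)$. The strict inequality holds because at least one of the bounds used is strict for $\xi>0$.

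For the hypergeometric case I would invoke Hoeffding's observation that $\Exp[f(X)]\le\Exp[f(Y)]$ for every convex $f$, where $Y\sim\mathrm{Bin}(h,\ell/n)$ has the same mean. In particular this holds for $f(x)=e^{tx}$, so the moment generating function bound above transfers verbatim and the same tail estimates follow.

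I expect the main obstacle to be this hypergeometric comparison; the rest is routine calculus. To prove it, I would realise sampling without replacement as a function of sampling with replacement, conditioning on the set of distinct draws, and then apply Jensen's inequality. Alternatively, I can simply cite Theorem~2.10 of~\cite{purple-book}, which is exactly how the statement is sourced.
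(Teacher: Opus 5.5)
Your proposal is correct and follows the same route as the source the paper cites; the paper gives no proof of its own. You use the exponential-moment Chernoff bound with $\phi(x)=(1+x)\log(1+x)-x$, then the simplification $\phi(\xi)\ge \xi^2/3$ for $\xi\le\tfrac32$ (the content of Corollary~2.3), then transfer to the hypergeometric case via Hoeffding's convex-order comparison applied to $e^{tx}$ (the mechanism behind Theorem~2.10).

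One small detail is missing. For $\xi\in[1,\tfrac32)$ the lower-tail event $X<(1-\xi)\Exp[X]\le 0$ is empty, so you only need $\phi(-\xi)\ge\xi^2/2$ for $\xi<1$, where it holds.
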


In addition, we will apply the following martingale-type concentration bound, which is a consequence of Freedman's inequality.
Let $\Omega$
be a finite probability space. A \emph{filtration} $(\cF_0$,
$\cF_1$, \dots, $\cF_n)$ is\footnote{This is not the standard definition of a filtration, which would replace each $\cF_i$ with the $\sigma$-algebra it generates; this version is more convenient here.} a sequence of partitions of~$\Omega$
such that $\cF_i$ refines $\cF_{i-1}$ for all $i\in[n]$.  For a filtration $(\cF_0$, $\cF_1$, \dots, $\cF_n)$ we say
that a function $f\colon\Omega\rightarrow \mathbb{R}$ is
\emph{$\cF_i$-measurable} if $f$ is constant on each part of
$\cF_i$. Further, for any random variable
$Y\colon\Omega\rightarrow\mathbb{R}$ the \emph{conditional expectation}
$\Exp(Y|\cF_i)\colon\Omega\rightarrow\mathbb{R}$ with respect to $\cF_i$ is defined by
$\Exp(Y|\cF_i)(x)=\Exp(Y|X)$, where $X\in\cF_i$ is such that $x\in X$.
The following concentration bound follows for example from \cite[Corollary~7]{ABCT:leaves}.

\begin{theorem}
\label{thm:freedman}
  Let $\Omega$ be a finite probability space and $(\mathcal{F}_0,
  \mathcal{F}_{1},\dots,\mathcal{F}_{n})$ be a filtration. Let $\mu,R>0$, and let $Y_{i}$ be an
  $\mathcal{F}_i$-measurable non-negative random variable for each $i\in[n]$.
  If $\sum_{i=1}^{n}\Exp\big[Y_{i}\big|\mathcal{F}_{i-1}\big]\le\mu$ and $0\le Y_i\le R$ for each $i\in[n]$, then
  \begin{equation*}
      \Prob\bigg[\sum_{i=1}^{n}Y_{i} \geq 2\mu\bigg] \le\exp\Big(-\frac{\mu}{4R}\Big)\,.
  \end{equation*}
\end{theorem}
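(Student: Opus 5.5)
We prove the statement directly with the standard exponential supermartingale argument, rather than quoting Freedman's inequality in its general form. Write $S_k=\sum_{i\le k}Y_i$ and $m_i=\Exp[Y_i|\cF_{i-1}]$. Note that $m_i$ is $\cF_{i-1}$-measurable, and by hypothesis $\sum_{i=1}^n m_i\le\mu$ pointwise on $\Omega$. Fix a parameter $\lambda>0$, to be chosen at the end.

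\textbf{Step 1: bound the conditional moment generating function of each $Y_i$.} Since $x\mapsto e^{\lambda x}$ is convex and $0\le Y_i\le R$, we have
\[
e^{\lambda Y_i}\le 1+\frac{e^{\lambda R}-1}{R}\,Y_i .
\]
Setting $c=(e^{\lambda R}-1)/R$ and taking conditional expectations gives
\[
\Exp\big[e^{\lambda Y_i}\big|\cF_{i-1}\big]\le 1+c\,m_i\le e^{c\,m_i}.
\]

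\textbf{Step 2: build a supermartingale.} Define
\[
M_k=\exp\Big(\lambda S_k-c\sum_{i\le k}m_i\Big),\qquad M_0=1 .
\]
Since $S_{k-1}$ and $m_k$ are $\cF_{k-1}$-measurable, Step 1 gives
\[
\Exp[M_k|\cF_{k-1}]=M_{k-1}\,e^{-c m_k}\,\Exp\big[e^{\lambda Y_k}\big|\cF_{k-1}\big]\le M_{k-1}.
\]
Iterating this inequality yields $\Exp[M_n]\le 1$. All of this is elementary, because $\Omega$ is finite and conditional expectations are just averages over the parts of each partition.

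\textbf{Step 3: Markov's inequality.} On the event $\{S_n\ge 2\mu\}$ we have $\sum_i m_i\le\mu$, and hence $M_n\ge \exp\big((2\lambda-c)\mu\big)$. Markov's inequality then gives
\[
\Prob[S_n\ge 2\mu]\le \exp\big(-(2\lambda-c)\mu\big).
\]

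\textbf{Step 4: choose $\lambda$.} Take $\lambda=\log 2/R$. Then $c=1/R$, so
\[
2\lambda-c=\frac{2\log 2-1}{R}\approx\frac{0.386}{R}\ge\frac{1}{4R},
\]
which gives the claimed bound $\exp(-\mu/(4R))$.

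There is no real obstacle in this argument. The only point needing care is that the hypothesis $\sum_i m_i\le\mu$ is used pointwise on $\Omega$, which the statement provides.
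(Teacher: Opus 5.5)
Your proof is correct. The chord bound $e^{\lambda Y_i}\le 1+cY_i$ on $[0,R]$, the supermartingale $M_k$, the pointwise use of $\sum_i\Exp[Y_i|\cF_{i-1}]\le\mu$ together with $c\ge0$, and Markov's inequality all go through as you wrote them. With $\lambda=\log 2/R$ you even get the stronger exponent $(2\log 2-1)\mu/R$.

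The paper proves nothing here. It only asserts that the bound is a consequence of Freedman's inequality, citing \cite[Corollary~7]{ABCT:leaves}. The natural derivation along those lines goes through the centred differences $Y_i-\Exp[Y_i|\cF_{i-1}]$. Since $0\le Y_i\le R$, the conditional variance is at most $R\,\Exp[Y_i|\cF_{i-1}]$, so the predictable quadratic variation is at most $R\mu$. Freedman with deviation $\mu$ then gives roughly $\exp(-3\mu/(8R))$.

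Your route skips the centring and the variance estimate. Because the $Y_i$ are non-negative, convexity controls the conditional moment generating function directly by the conditional mean. This gives a short, self-contained proof that fits the paper's finite-partition formalism, with a slightly better constant.

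The citation buys brevity and places the statement inside a more general inequality, one that also covers genuinely centred or two-sided increments. Your argument uses, and needs, exactly the one-sided non-negative structure that the paper's application to the variables $X_i$ in the main lemma actually has.
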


Finally, we need the local lemma, for which the following definition is required.

\begin{definition}[$p$-dependency graph]
  Let~$\cX$ be a set of events in some probability space, and let $p\in[0,1]$. Then~$D=(\cX,E)$
  is a \emph{$p$-dependency graph} for~$\cX$ if every event~$X\in\cX$ satisfies the following condition: for every
  set of events~$\cY\subset\cX \setminus N_D(X)$ we have $\Prob(X|\bigcap_{Y\in\cY}\bar Y)\le p$.
\end{definition}

The local lemma states that, if a given set~$\cX$ of bad events has a
sufficiently sparse $p$-dependency graph compared to $p$, then with positive probability no bad event occurs.

\begin{lemma}[Lov\'asz local lemma~\cite{ErdLov75}]
\label{lem:LLLL}
  Given $p\in[0,1]$, let $\cX=\{X_i\}_{i\in[t]}$ be some set of events with $p$-dependency
  graph~$D=(\cX,E)$. If $4\Delta(D)p<1$, then $\Prob(\bigcap_{i\in[t]}\bar X_i)>0$.
\end{lemma}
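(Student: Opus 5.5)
The plan is the standard inductive proof of the symmetric local lemma. Let $d=\Delta(D)$. We may assume $d\ge1$. If $d=0$ the hypothesis $4dp<1$ gives no information. The conclusion then follows directly from the definition whenever $p<1$: conditioning on the complements of all other events never raises a probability above $p$. The degenerate case $d=0$, $p=1$ has to be excluded, for instance a single event of probability~$1$. So for $d\ge1$ we have $2p<\frac{1}{2d}\le\frac12$. The key claim is the following: for every $X\in\cX$ and every $\cY\subset\cX\setminus\{X\}$ with $\Prob(\bigcap_{Y\in\cY}\bar Y)>0$,
\[
\Prob\Big(X\,\Big|\,\bigcap_{Y\in\cY}\bar Y\Big)\le 2p\,.
\]
We prove this jointly with the positivity of $\Prob(\bigcap_{Y\in\cY}\bar Y)$, by induction on $|\cY|$.

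For the inductive step, split $\cY=\cY_1\cup\cY_2$, where $\cY_1=\cY\cap N_D(X)$ and $\cY_2=\cY\setminus N_D(X)$. If $\cY_1=\emptyset$, the definition of a $p$-dependency graph gives the bound $p\le 2p$ directly. Otherwise write
\[
\Prob\Big(X\,\Big|\,\bigcap_{\cY}\bar Y\Big)=\frac{\Prob\big(X\cap\bigcap_{\cY_1}\bar Y\,\big|\,\bigcap_{\cY_2}\bar Y\big)}{\Prob\big(\bigcap_{\cY_1}\bar Y\,\big|\,\bigcap_{\cY_2}\bar Y\big)}\,.
\]
The numerator is at most $\Prob(X\mid\bigcap_{\cY_2}\bar Y)\le p$, using the dependency-graph property since $\cY_2\cap N_D(X)=\emptyset$. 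For the denominator, use the union bound:
\[
\Prob\Big(\bigcap_{\cY_1}\bar Y\,\Big|\,\bigcap_{\cY_2}\bar Y\Big)\ge 1-\sum_{Y\in\cY_1}\Prob\Big(Y\,\Big|\,\bigcap_{\cY_2}\bar Y\Big)\ge 1-d\cdot 2p\ge\tfrac12 .
\]
Here we apply the induction hypothesis to each $Y\in\cY_1$ with conditioning set $\cY_2$. This is legitimate because $|\cY_2|<|\cY|$, as $\cY_1\neq\emptyset$. We also use $|\cY_1|\le d$. Combining the two bounds gives the claimed $2p$.

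The only delicate point is bookkeeping: every conditioning event must have positive probability. We handle this inside the same induction. We have $\Prob(\bigcap_{\cY}\bar Y)=\Prob(\bigcap_{\cY\setminus\{Y_0\}}\bar Y)\cdot\big(1-\Prob(Y_0\mid\bigcap_{\cY\setminus\{Y_0\}}\bar Y)\big)$ for any $Y_0\in\cY$. By induction the first factor is positive and the second is at least $1-2p>0$.

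Finally, order the events as $X_1,\dots,X_t$ and apply the chain rule:
\[
\Prob\Big(\bigcap_{i\in[t]}\bar X_i\Big)=\prod_{i=1}^t\Big(1-\Prob\Big(X_i\,\Big|\,\bigcap_{j<i}\bar X_j\Big)\Big)\ge(1-2p)^t>0 ,
\]
which is the statement of Lemma~\ref{lem:LLLL}. No step here is a real obstacle. The main care is choosing the right inductive statement, namely the uniform conditional bound $2p$ together with positivity, so that the denominator estimate closes using exactly the hypothesis $4\Delta(D)p<1$.
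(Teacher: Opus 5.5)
Your proof is correct and is essentially the argument the paper relies on. The paper points to the Erd\H{o}s--Lov\'asz induction (conditional probability at most $2p$, conditioning set split into $D$-neighbours and non-neighbours of $X$, union bound in the denominator) and notes that the $p$-dependency definition directly gives the numerator bound $\Prob(X\mid\bigcap_{Y\in\cY_2}\bar Y)\le p$, which lopsidependency supplied in the original. Your caveat that the degenerate case $\Delta(D)=0$, $p=1$ must be excluded is a fair point about the statement as written; it is harmless here, since the paper only applies the lemma with $p=\exp(-\Delta^{1-2\eps})<1$.
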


We note that this statement is slightly different to that in~\cite{ErdLov75}, but their proof goes through almost verbatim. Specifically, they use `lopsidependency' to write $\Prob[X_m|\bar{X}_{d+1},\dots,\bar{X}_s]\le\Prob[X_m]\le p$, where $X_{d+1},\dots,X_s$ are non-neighbours of $X_m$ in $D$. Our definition of $p$-dependency graph gives $\Prob[X_m|\bar{X}_{d+1},\dots,\bar{X}_s]\le p$ directly, and this is all that is required in their proof.

\section{A lower bound construction}
\label{sec:lower}

In this section we prove Theorem~\ref{thm:lower-bound}.
For constructing the graphs~$G$ and~$H$ in this theorem, we shall use the following two lemmas providing graphs with suitable expansion 
properties.
In a bipartite graph~$H$ with bipartition $(X,Y)$ we call a pair $(X',Y')$ with $X'\subset X$ and $Y'\subset Y$ a \emph{bipartite hole} of size $\ell$ if $|X'|=|Y'|=\ell$ and there are no edges between $X'$ and $Y'$.

\begin{lemma}\label{lem:bi-expander}
  For any $n$, any $\Delta\ge 3$
  and any $0<\tau<1$ such that $\Delta>2(1-\log\tau)/\tau$, there is a bipartite graph $H$ with bipartition $(X,Y)$ and $|X|=|Y|=n$, whose maximum degree is at most $\Delta$,
  that has no bipartite holes of size at least $\tau n$.
\end{lemma}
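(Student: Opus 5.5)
The plan is a random construction: take $H$ to be the union of $\Delta$ independent uniformly random perfect matchings between $X$ and $Y$, where $|X|=|Y|=n$. Each vertex has degree at most $\Delta$ in this union; parallel edges only lower degrees. It then suffices to show by a union bound that, with positive probability, there is no bipartite hole of size exactly $\ell:=\lceil\tau n\rceil$. This suffices because any hole of size at least $\tau n$ contains a hole of size exactly $\ell$, obtained by shrinking both sides. If $\ell>n$ the statement is vacuous, so we assume $\ell\le n$ and write $\sigma:=\ell/n\ge\tau$.

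First we bound the probability that a fixed pair $(X',Y')$ with $|X'|=|Y'|=\ell$ is a hole. For one uniformly random perfect matching $M$, the image $M(X')$ is a uniformly random $\ell$-subset of $Y$. Hence
\[\Prob\big[M(X')\cap Y'=\emptyset\big]=\binom{n-\ell}{\ell}\Big/\binom{n}{\ell}\le\Big(1-\frac{\ell}{n}\Big)^{\ell}\le e^{-\sigma\ell}\,,\]
using $\frac{n-\ell-i}{n-i}\le\frac{n-\ell}{n}$ for each factor. By independence of the $\Delta$ matchings, the pair is a hole in $H$ with probability at most $e^{-\Delta\sigma\ell}$.

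Next we count candidate pairs. There are at most $\binom{n}{\ell}^2\le (en/\ell)^{2\ell}=\exp\big(2\ell(1-\log\sigma)\big)$ of them. So the expected number of holes of size $\ell$ is at most
\[\exp\Big(\ell\big(2(1-\log\sigma)-\Delta\sigma\big)\Big)\,.\]
This is strictly less than $1$ as soon as $\Delta\sigma>2(1-\log\sigma)$. The function $s\mapsto \Delta s-2(1-\log s)$ is increasing in $s$, and $\sigma\ge\tau$. Therefore the hypothesis $\Delta>2(1-\log\tau)/\tau$ gives exactly this inequality. Hence some outcome has no hole of size $\ell$, and that outcome is the required $H$.

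The only point requiring care, and the main obstacle, is the rounding: $\ell=\lceil\tau n\rceil$ rather than $\tau n$. This is handled by the monotonicity of $s\mapsto \Delta s-2(1-\log s)$, together with the strictness of the hypothesis, which makes the expectation strictly below $1$. The assumption $\Delta\ge3$ plays no essential role beyond ensuring that the matchings make sense as a degree-$\Delta$ construction.
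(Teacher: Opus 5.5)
Your proposal is correct and follows essentially the same route as the paper: a union of $\Delta$ independent uniform random perfect matchings, the per-matching bound $(1-\ell/n)^\ell$, and a union bound over at most $(en/\ell)^{2\ell}$ pairs with $\ell=\lceil\tau n\rceil$. Your handling of the rounding via monotonicity in $\sigma=\ell/n$ is equivalent to the paper's direct use of $en/t\le e/\tau$ and $t^2\Delta/n\ge\tau t\Delta$.
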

\begin{proof}
    Consider a collection of $\Delta$ independent uniform random matchings $M_1$, $M_2, \dots,M_\Delta$ on $(X,Y)$. For fixed sets $A$ and $B$ of size $t=\lceil \tau n\rceil$ in $X$ and $Y$ respectively, the probability that for a fixed $i\in[\Delta]$ the matching $M_i$ does not match any vertex of $A$ to a vertex of $B$ is
    \[\frac{(n-t)\cdot(n-t-1)\cdot\ldots\cdot(n-2t+1)\cdot(n-t)!}{n!}\le\Big(\frac{n-t}{n}\Big)^t\,.\]
    Letting $H=\bigcup_{i=1}^\Delta M_i$,
    we see that the probability of $(A,B)$ being a bipartite hole in $H$ is at most
    \[
     \Big(\frac{n-t}{n}\Big)^{t\Delta}
     =
     \Big(1-\frac{t}{n}\Big)^{t\Delta}
     \le
     \exp\Big(-\frac{t^2\Delta}{n}\Big)\,.\]

    The number of choices of $(A,B)$ is $\binom{n}{t}^2\le\big(\tfrac{en}{t}\big)^{2t}$, so by the union bound the probability that $H$ contains a bipartite hole with parts of size $t$ is at most
    \[
     \Big(\frac{en}{t}\Big)^{2t}  
           \exp\Big(-\frac{t^2\Delta}{n}\Big)
     \le \exp\bigg(2t\Big(\log\Big(\frac{e}{\tau}\Big)-\frac{\tau\Delta}{2}\Big)\bigg)\,,
    \]
    which is smaller than $1$ when $\frac{\tau\Delta}{2}>1-\log\tau$.
\end{proof}

As a corollary of Lemma~\ref{lem:bi-expander} we obtain the following.

\begin{lemma}\label{lem:expander}
  For any $m$ there is an $m$-vertex graph $F$ with maximum degree $\Delta(F)\le40$ such that for any pair $A,B$ of disjoint vertex sets in $F$ with $|A|,|B|\ge m/4$ we have $e(A,B)>0$.
\end{lemma}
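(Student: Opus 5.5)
We obtain $F$ from the bipartite graph of Lemma~\ref{lem:bi-expander} by identifying its two sides. Assume first that $m$ is even, $m=2n$; odd $m$ is handled at the end. We want to apply Lemma~\ref{lem:bi-expander} with $\Delta=20$ and $\tau$ chosen so that a bipartite hole of size $\tau n$ in a graph with sides of size $n$ corresponds to disjoint sets of size about $m/8$ in $F$.

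The construction is as follows. Take the bipartite graph $H'$ of Lemma~\ref{lem:bi-expander} with sides $X=\{x_1,\dots,x_m\}$ and $Y=\{y_1,\dots,y_m\}$, both of size $m$, with $\Delta=20$ and some $\tau<1/8$. Let $F$ on vertex set $[m]$ have the edge $ij$ ($i\neq j$) whenever $x_iy_j$ or $x_jy_i$ is an edge of $H'$, and drop any pairs with $i=j$. Then each $i$ has degree at most $\deg(x_i)+\deg(y_i)\le 40$.

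For expansion, let $A,B\subset[m]$ be disjoint with $|A|,|B|\ge m/4$. Set $A'=\{x_i:i\in A\}$ and $B'=\{y_j:j\in B\}$. These have size at least $m/4\ge\tau m$, so they span an edge $x_iy_j$ of $H'$. Since $i\in A$ and $j\in B$ are distinct, $ij$ is an edge of $F$. This works for every $m$, so no parity issue arises and only the sides of size $m$ are needed.

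It remains to choose the parameters. We need $\tau\le 1/4$ and $\Delta>2(1-\log\tau)/\tau$. With $\tau=1/4$ this reads $8(1+\log 4)\approx 19.09<20$, so $\Delta=20$ and $\tau=1/4$ work. If we want room against strict versus non-strict inequalities, note that a set of size at least $m/4=\tau m$ contains a subset of size $\lceil\tau m\rceil$ when $m/4$ is not an integer; taking subsets of size exactly $\lceil \tau m\rceil$ matches the proof of Lemma~\ref{lem:bi-expander}. The only step needing care is this numerical check of the hypothesis of Lemma~\ref{lem:bi-expander}. The bound $40=2\cdot 20$ then follows.
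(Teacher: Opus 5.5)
Your construction and argument are the same as the paper's: apply Lemma~\ref{lem:bi-expander} with both sides of size $m$, $\Delta=20$ and $\tau=1/4$ (valid since $8(1+\log 4)<20$), identify $x_i$ with $y_i$, and read off $\Delta(F)\le 40$ and $e(A,B)>0$ from the absence of bipartite holes of size $m/4$. Delete the opening remarks about $m=2n$, $m/8$ and $\tau<1/8$: with $\tau<1/8$ the hypothesis $\Delta>2(1-\log\tau)/\tau$ fails for $\Delta=20$, and your final argument does not use them anyway.
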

\begin{proof}
    Let $\Delta=20$ and observe that $\Delta>2(1-\log\frac14)/\frac14$.
    Hence, the graph $H$ of Lemma~\ref{lem:bi-expander} with $2m$ vertices and maximum degree $\Delta$ whose parts are $X=\{x_1,\dots,x_m\}$ and $Y=\{y_1,\dots,y_m\}$
    has no bipartite hole of size $\frac14 m$.
    
    Let $F$ be obtained from~$H$ by identifying pairs of vertices in $X$ and $Y$. More precisely, $F$ has vertex set $Z=\{z_1,\dots,z_m\}$ and $z_iz_j$ is an edge of~$F$ precisely when $x_iy_j\in E(H)$ or $y_ix_j\in E(H)$.
    Note that the maximum degree of $F$ is at most $2\Delta=40$. 
    
    Now, suppose that $(A,B)$ is a pair of disjoint vertex sets in~$F$, each of size $\frac14m$ and with no edges between them. Let $A'$ be the set of vertices in $X$ corresponding to $A$ and $B'$ the set of vertices in $Y$ corresponding to $B$. Then $(A',B')$ forms a bipartite hole in $H$, a contradiction.
\end{proof}

The graph~$G$ in our construction will be a blow-up of the complement $\bar F$ of the graph~$F$ given in this last lemma. Here, a \emph{$k$-blowup} of~$\bar F$ is the graph obtained from~$\bar F$ by replacing each of its vertices by an independent set of size~$k$ and each of its edges by a complete bipartite graph between the corresponding independent sets. We will call these independent sets replacing a vertex in such a blowup the \emph{clusters} of the blowup.

\begin{proof}[Proof of Theorem~\ref{thm:lower-bound}]
  Given $\Delta\ge 2$ and $n\ge\Delta+1$, let $\tau=2\frac{\log\Delta}{\Delta}$ and set $m=\frac{\Delta}{4\log\Delta}$. 
  Observe that if $\Delta\le 200$, then the minimum degree condition in the theorem is vacuous. We can choose $G$ to be the $n$-vertex graph with no edges and $H$ any $n$-vertex graph with at least one edge. Hence we can assume $\Delta\ge200$ from this point.
  
  Let~$H'$ be the graph from Lemma~\ref{lem:bi-expander} with its bipartition $(X,Y)$, each part of size $\lceil\tfrac{n}{2}\rceil$, and with maximum degree at most~$\Delta$, which has no bipartite hole of size at least $\tau\lceil\frac{n}{2}\rceil\le\frac{4}{3}\frac{\log\Delta}{\Delta}n=\frac{n}{3m}$.
  This graph~$H'$ exists because
  \[2\frac{1-\log\tau}{\tau}=\Delta\Big(\frac{1-\log 2-\log\log\Delta+\log\Delta}{\log\Delta}\Big)<\Delta\,,\]
  where the last inequality uses $\Delta\ge 100$.
  If~$n$ is even let $H=H'$, and if~$n$ is odd let~$H$ be obtained from~$H'$ by deleting an arbitrary vertex. Then~$H$ still has maximum degree at most~$\Delta$ and no bipartite hole of size at least $\frac{2n}{3m}$.
  
  Let $F$ be the graph on $m$ vertices given by Lemma~\ref{lem:expander} with maximum degree at most~$40$ and edges between every pair of disjoint vertex sets of size at least $m/4$.
  Consider the  complement~$\bar F$ of~$F$. Let~$G'$ be the $\lceil n/m\rceil$-blowup of $\bar F$, and let~$G$ be a graph on~$n$ vertices obtained from~$G'$ by deleting from each cluster at most one vertex.
  We easily check that for $n\ge 100$ we have
  \[\delta(G)\ge n-41\frac{n}{m}
  =\Big(1-164\frac{\log\Delta}{\Delta}\Big)n\,.\]
  It remains to show that~$H$ is not a subgraph of~$G$. Assume, for a contradiction, that we can embed~$H$ into~$G$ and fix such an embedding. We now label a cluster of~$G$ by~$X$ if at least half of its vertices host a vertex from~$X$ in this embedding, and by~$Y$ otherwise.
  Now, observe that if a cluster~$C$ is labelled~$X$ (respectively~$Y$), then in fact at least $\frac23|C|$ of its vertices are in~$X$ (respectively~$Y$). Otherwise, the vertices of $H$ embedded to~$C$ would contain a bipartite hole of size at least $\frac{n}{3m}$, contradicting the choice of~$H$.
  
  We conclude that at most $\frac{3}{4}m$ clusters of~$G$ are labelled~$X$, and similarly at most $\frac34m$ clusters are labelled $Y$. It follows that at least $\frac{1}{4}m$ clusters of~$G$ are labelled~$Y$ and at least $\frac{1}{4}m$ clusters of~$G$ are labelled~$X$.
  Hence, by out choice of $F$, there is an edge of $F$ from a cluster $A$ labelled $X$ to a cluster $B$ labelled $Y$, so $G$ has no edges between these clusters. Letting $A'$ be the vertices of $X$ embedded to $A$, and $B'$ the vertices of $Y$ embedded to $B$, we have that $(A',B')$ is a bipartite hole in $H$ of size at least $\frac{2n}{3m}$, contradicting the choice of~$H$.
\end{proof}

\section{Proof of the upper bound}
\label{sec:upper}

In this section we prove Theorem~\ref{thm:upper-bound}. The proof is based on the following natural probabilistic embedding argument: given $H$ with its bipartition classes $X$ and $Y$, we split $G$ randomly into two parts $U$ and $V$ of sizes $|X|$ and $|Y|$. We embed $X$ into $U$, and the embedding of $Y$ into $V$ is then equivalent to finding a perfect matching in an auxiliary bipartite graph $F$ with parts $Y$ and $V$, putting an edge from $y\in Y$ to $v\in V$ if $N_H(y)$ was embedded to $N_G(v)$.

To find this perfect matching, we check Hall's condition. Specifically, we will prove a minimum degree condition on $F$, and that any large enough subset of $V$ dominates most of $Y$. We will see that Hall's condition follows easily from these two properties.

The first idea for embedding $X$ into $U$ is simply to pick a uniform random injection, and if $\Delta$ is sufficiently large this is exactly what we will do. If $\Delta$ is small, however, this strategy does not quite work: we obtain the required minimum degree for vertices in $V$, and the large subsets condition, but a very tiny number of vertices in $Y$ might have few or no neighbours in $V$. 
In this case, we need to be a bit more careful. We set aside a small `reservoir' of far-apart vertices in $X$ and randomly embed the rest into $U$. We then re-embed the few vertices of $X$ which led to the minimum degree condition on $F$ failing, and finally embed the reservoir arbitrarily. We will show that this guarantees the required conditions on $F$.

As we shall explain below, the main lemma we need to analyse this embedding approach is the following, which states that in a bipartite graph with high minimum degree, very few sets of size at most~$\Delta$ have a small common neighbourhood. 

\begin{lemma}[main lemma]\label{lem:main}
    For all $0<\eps\le 1/3$ and $\Delta\ge2$ the following holds. Let $G'$ be a bipartite graph with parts $A'$ and $B'$, with $|A'|\ge2\Delta$ and $|B'|\ge 1$.
    Suppose that for all $a\in A'$ and all
    $b\in B'$ we have
    \begin{equation*}
      \deg(a)\ge \Big(1-\frac{\eps\log\Delta}{16\Delta}\Delta^{\eps}\Big)|B'|
      \quad\text{and}\quad
      \deg(b)\ge \Big(1-\frac{\eps\log\Delta}{16\Delta}\Big)|A'|\,.
    \end{equation*}
    For any given $1\le t\le\Delta$, let~$T\subseteq A'$ be a set chosen uniformly at random among all subsets of~$A'$ of size~$t$.
    Then
    \[\Prob\Big(\big|N_{G'}(T)\big|\le \Delta^{-\eps}|B'|\Big)\le \exp\big(-\Delta^{1-2\eps}\big)\,.\]
\end{lemma}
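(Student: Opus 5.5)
We would prove Lemma~\ref{lem:main} by exposing $T$ one vertex at a time and following the logarithm of the size of the common neighbourhood. The aim is to set things up so that Theorem~\ref{thm:freedman} applies with $\mu$ of order $\eps\log\Delta$ and $R$ of order $\eps\log\Delta\cdot\Delta^{2\eps}/\Delta$.

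\textbf{Setup.} Write
\[
\alpha=\frac{\eps\log\Delta}{16\Delta}\Delta^{\eps},\qquad \beta=\frac{\eps\log\Delta}{16\Delta},\qquad \theta=\Delta^{-\eps}|B'|.
\]
We generate $T$ as a uniformly random sequence $a_1,\dots,a_t$ of distinct vertices of $A'$. The filtration $\cF_i$ is the partition by the values of $a_1,\dots,a_i$. Let $N_0=B'$ and $N_i=N_{G'}(\{a_1,\dots,a_i\})$, so that $N_t=N_{G'}(T)$. Define
\[
Y_i=-\log\big(|N_i|/|N_{i-1}|\big)\quad\text{if } |N_{i-1}|>\theta,\qquad Y_i=0\quad\text{otherwise}.
\]
Each $Y_i$ is $\cF_i$-measurable and non-negative.

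\textbf{Reduction to a tail bound.} Suppose $|N_t|\le\theta$. Let $j$ be the first index with $|N_j|\le\theta$. Then $j\ge1$, since $|N_0|=|B'|>\theta$ (for $\Delta\ge2$ and $\eps>0$). Also $|N_{j-1}|>\theta$, so every step up to and including $j$ is unstopped. Hence
\[
\sum_{i\le t}Y_i\;\ge\;\sum_{i\le j}Y_i\;=\;\log\frac{|B'|}{|N_j|}\;\ge\;\eps\log\Delta.
\]
So it suffices to bound $\Prob\big[\sum_i Y_i\ge\eps\log\Delta\big]$.

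\textbf{Bounded increments.} Fix a step $i$ with $|N_{i-1}|>\theta$. The vertex $a_i$ has at most $\alpha|B'|$ non-neighbours in $B'$. So the fraction $x_i$ of $N_{i-1}$ that $a_i$ misses satisfies
\[
x_i\le \frac{\alpha|B'|}{\theta}=\alpha\Delta^{\eps}=\frac{\eps\log\Delta\,\Delta^{2\eps}}{16\Delta}.
\]
Since $\eps\le1/3$, this is at most $\tfrac12$. This is exactly why the threshold $\theta$ is used: it prevents $N_i$ from collapsing to $\emptyset$ in one step. By~\eqref{eq:log},
\[
Y_i=-\log(1-x_i)\le 2x_i\le R:=\frac{\eps\log\Delta\,\Delta^{2\eps}}{8\Delta}.
\]

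\textbf{Conditional expectations.} Conditioned on $\cF_{i-1}$, the vertex $a_i$ is uniform on the at least $|A'|-\Delta\ge|A'|/2$ unused vertices. Each $b\in N_{i-1}$ has at most $\beta|A'|$ non-neighbours in $A'$. Therefore
\[
\Exp[x_i\mid\cF_{i-1}]\le 2\beta,\qquad\text{and so}\qquad \Exp[Y_i\mid\cF_{i-1}]\le 4\beta.
\]
Summing over $t\le\Delta$ steps gives $\sum_i\Exp[Y_i\mid\cF_{i-1}]\le 4\beta\Delta=\eps\log\Delta/4$. We take $\mu=\eps\log\Delta/2$, which is a valid (larger) upper bound and satisfies $2\mu=\eps\log\Delta$.

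\textbf{Applying Freedman.} Theorem~\ref{thm:freedman} now gives
\[
\Prob\Big[\sum_i Y_i\ge \eps\log\Delta\Big]\le\exp\Big(-\frac{\mu}{4R}\Big)=\exp\Big(-\frac{\eps\log\Delta/2}{\eps\log\Delta\,\Delta^{2\eps}/(2\Delta)}\Big)=\exp\big(-\Delta^{1-2\eps}\big).
\]
This is exactly the bound claimed in the lemma.

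\textbf{Main obstacle.} The delicate step is choosing the stopped process correctly. The increment bound $R$ depends on $|N_{i-1}|$ being large, so we must stop at the threshold $\theta$ itself rather than at some larger multiple of it. Stopping at a larger multiple would need $\Delta^{\eps/2}\ge2$, which fails for small $\Delta$.

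We also need to check that the crossing step still contributes a bounded $Y_j$. It does, because $Y_j$ is computed from $|N_{j-1}|>\theta$ and so obeys the same bound $x_j\le\tfrac12$. Finally, we need the without-replacement correction $|A'|-\Delta\ge|A'|/2$, which is where the hypothesis $|A'|\ge2\Delta$ enters.
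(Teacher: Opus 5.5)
Your proposal is correct and follows essentially the same route as the paper. You expose $T$ vertex by vertex, track the stopped log-decrements of the common neighbourhood with threshold $\Delta^{-\eps}|B'|$, and apply Theorem~\ref{thm:freedman} with the same parameters $R=\frac18\eps\Delta^{2\eps-1}\log\Delta$ and $\mu=\frac12\eps\log\Delta$. The one cosmetic difference is that you run the process directly for $t$ steps instead of first reducing to $t=\Delta$ by the coupling argument. This works equally well, since with fewer steps the sum of conditional expectations only gets smaller.
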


Let us now provide the details of our embedding strategy and the properties this strategy guarantees. We shall use the following setup, which in particular defines the reservoir set we shall use.

\begin{setup}\label{setup}
  Let
  \[c=\frac{1}{500}\,,\qquad 2\cdot e^{1000}\le\Delta\le 0.001n^{1/5}\,,\qquad\text{and}\qquad\eps=\frac13\,.\]
  Let~$H$ be a bipartite graph on~$n$ vertices with maximum degree $\Delta(H)\leq \Delta$ and with bipartition classes $V(H)=X\dcup Y$ with $|X|\ge|Y|\ge|X|/\Delta$. If $\Delta\le \big(\log(2n)\big)^{1/(1-2\eps)}$, we set aside a set $\tilde X\subset X$, which we call the \emph{reservoir}, of vertices of distance at least~$4$ and with $|\tilde X|=\lceil |X|/\Delta^{2+3\eps}\rceil$. If $\Delta> \big(\log(2n)\big)^{1/(1-2\eps)}$, we set $\tilde X=\emptyset$.
  
  Let~$G$ be a graph with minimum degree $\delta(G)\ge\big(1-c\log\Delta/\Delta\big)n.$
  Let $V(G)=U\dcup V$ be a partition of the vertex set of~$G$ with $|U|=|X|$ and $|V|=|Y|$ such that
  \begin{equation*}
    \delta_G(U,V)\ge\Big(1-2c\frac{\log\Delta}{\Delta}\Big)|V|\,,\qquad
    \delta_G(V,U)\ge\Big(1-2c\frac{\log\Delta}{\Delta}\Big)|U|\,. 
  \end{equation*}
\end{setup}

Observe that the existence of the set $\tilde X$ in this setup follows from the following greedy algorithm: Add an arbitrary vertex $x\in X$ to $\tilde X$, cross out the at most $\Delta(\Delta-1)$ vertices at distance~$2$ from~$x$, add an arbitrary remaining vertex of~$X$ to~$\tilde X$ and repeat. Since $H$ is bipartite and
$\tilde X\subset X$, this ensures that any two vertices of $\tilde X$ are at distance at least $4$ in $H$.
The existence of the partition $V(G)=U\dcup V$ follows from the following standard lemma.

\begin{lemma}\label{lem:makebip}
  Let $\rho>0$ and let 
  $a,n\in\mathbb{N}$ be such that $\rho a, \rho(n-a)\ge6\log n$.
  Let~$G$ be a graph on~$n$ vertices with minimum degree $\delta(G)\ge(1-\rho)n$. A random partition of $V(G)$ into parts $A$ and $B$ with $|A|=a$ satisfies $\delta_G(A,B)\ge\big(1-2\rho\big)|B|$ and $\delta_G(B,A)\ge\big(1-2\rho\big)|A|$
  with probability at least $1-4n^{-1}$.
\end{lemma}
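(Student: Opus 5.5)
We choose the partition uniformly at random among all pairs $(A,B)$ with $|A|=a$ and $|B|=n-a$, bound the failure probability of each vertex separately using the hypergeometric Chernoff bound (Lemma~\ref{lem:hypgeo}), and finish with a union bound over all $n$ vertices and the two types of conditions.

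\textbf{Vertices landing in $A$.} Fix a vertex $v\in V(G)$ and condition on $v\in A$. Then $B$ is a uniformly random subset of size $n-a$ of the $n-1$ vertices other than $v$. The number of non-neighbours of $v$ in $V(G)\setminus\{v\}$ is at most $\rho n$. Let $Z$ be the number of non-neighbours of $v$ that land in $B$. Then $Z$ is hypergeometric (after adding dummy elements, it is stochastically dominated by one), with
\[\Exp[Z]\le \rho n\cdot\frac{n-a}{n-1}\,.\]
We want to show $Z\le 2\rho(n-a)=2\rho|B|$, which is exactly $\deg(v,B)\ge(1-2\rho)|B|$.

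\textbf{Applying Chernoff.} The cleanest route is to enlarge the set of non-neighbours to a set $S$ with $\rho'(n-1)$ elements, where $\rho'(n-1)\ge \rho n$, so that the expectation is exactly
\[\mu=\rho'(n-a)\approx\rho(n-a)\ge 6\log n\,.\]
Since $Z$ is monotone in $S$, this enlargement only makes the event harder to avoid. The factor $\frac{n}{n-1}$ needs slight care: either absorb it by applying Lemma~\ref{lem:hypgeo} with $\xi$ slightly less than $1$, or note that the non-neighbourhood excluding $v$ itself has size at most $\rho n-1$ when $\deg(v)\ge(1-\rho)n$, since $v$ is not adjacent to itself. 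With $\xi$ close to $1$, Lemma~\ref{lem:hypgeo} gives
\[\Prob[Z>2\mu]<2\exp(-\mu/3)\le 2\exp(-2\log n)=2n^{-2}\,.\]
Rounding constants is the only delicate point, and the hypotheses $\rho a,\rho(n-a)\ge 6\log n$ are tailored exactly to produce $\exp(-2\log n)$.

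\textbf{Vertices landing in $B$ and the union bound.} The symmetric argument for $v\in B$ uses the count of non-neighbours landing in $A$, with expectation at most about $\rho a\ge 6\log n$. It shows that the probability that $v\in B$ violates $\deg(v,A)\ge(1-2\rho)|A|$ is at most $2n^{-2}$. Taking a union bound over the $n$ vertices and the two conditions gives total failure probability at most $4n\cdot n^{-2}=4n^{-1}$, as required.

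**Main obstacle.** The only real obstacle is the bookkeeping. The expectation may fall below $6\log n$ if $\Exp[Z]$ is smaller than $\rho(n-a)$, but this is handled by the monotone padding described above: Chernoff is applied with the padded mean $\mu$, and then $Z\le 2\mu\le 2\rho(n-a)$.
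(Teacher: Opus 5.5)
Your proposal is correct and follows essentially the paper's argument: pad the non-neighbourhood to a set of fixed size, apply Lemma~\ref{lem:hypgeo} with $\xi=1$ so that $\rho a,\rho(n-a)\ge 6\log n$ give $2n^{-2}$ per vertex, and take a union bound over the vertices and the two sides. The paper's version is slightly simpler in that it does not condition on which side a vertex lands. It requires every $x\in V(G)$ to have at most $2\rho|A|$ non-neighbours in the uniform $a$-set $A$ (and likewise for $B$). It pads $V(G)\setminus N(x)$, which contains $x$ and has size at most $\rho n$, to exactly $\rho n$ elements, so the mean is exactly $\rho a$ and the $n/(n-1)$ bookkeeping disappears. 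In your version, only the second option (padding to $\rho(n-1)$ elements, using that $x$ is its own non-neighbour) gives $\mu=\rho(n-a)$ exactly; the $\xi<1$ route falls slightly short of $2n^{-2}$.
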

\begin{proof}
    Given $G$, let $A$ be a uniform random subset of $V(G)$ of size $a$. Given $x\in V(G)$, let $S$ be a superset of $V(G)\setminus N(x)$ of size $\rho n$. By Lemma~\ref{lem:hypgeo}, with $\xi=1$, the probability of $|A\cap S|\ge 2\rho |A|$ is at most $2\exp\big(-\tfrac13\rho|A|\big)$.

    Taking the union bound over vertices~$x$ and using the lower bound on $a$, the probability that there exists a vertex of $G$ with more than $2\rho|A|$ non-neighbours in $A$ is at most $2n^{-2}\cdot n$. The same argument applies to $B$, which is  a uniform random subset of $V(G)$ of size $n-a$. Hence, either claimed degree condition fails with probability at most $4n^{-1}$.
\end{proof}

Starting from Setup~\ref{setup}, we construct the desired embedding of~$H$ into~$G$ using the following algorithm. We shall first give all the details of this algorithm and then formulate the properties that we claim hold after each stage. In these stages we construct various embeddings $\phi$ of vertices from $X$ to~$U$. For such an embedding with domain $\dom(\phi)$, we say that 
a vertex $y\in Y$ is \emph{$p$-problematic} for $\phi$ if the vertices of~$G$ hosting the neighbours of~$y$ embedded by~$\phi$ have a small common neighbourhood, that is, if
\[\Big|N_G\Big(\phi\big(N_H(y)\cap\dom(\phi)\big)\Big) \cap V\Big|\le\frac{|V|}{p}\,.\]

\textbf{Stage 1:} Let $\phi_1\colon X\setminus\tilde X\to U$ be an injection that is chosen uniformly at random.
Let 
\begin{align*}
Y_1&= \big\{y\in Y\colon \text{$y$ is $\Delta^\eps$-problematic for $\phi_1$}\big\}\,,\\
X'&= \big\{x\in X\setminus\tilde X\colon Y_1\cap N_H(x)\neq\emptyset\big\}\,, \qquad\qquad\\
X_1&=X\setminus(\tilde X\cup X')\,.
\end{align*}

\textbf{Stage 2:} We construct $\phi_2\colon X\setminus\tilde X\to U$ as follows. On $X_1$ we set $\phi_2=\phi_1$. On $X'$ we choose $\phi_2$ uniformly at random among all injections from $X'$ to $U\setminus\phi_1(X_1)$.
% , and with positive probability there are no $\Delta^{2\eps}$-problematic vertices for $\phi_2$.

\textbf{Stage 3:} We construct $\phi_3\colon X\to U$ by letting $\phi_3=\phi_2$ on $X\setminus\tilde X$, and embedding the vertices of $\tilde X$ arbitrarily into $U\setminus\im\phi_2$.

\textbf{Stage 4:} We construct the auxiliary bipartite graph~$F$ with vertex set $V(F)=Y\dcup V$ and with all edges $yv$ with $y\in Y$ and $v\in V$ such that $v\in N_G\big(\phi_3(N_H(y))\big)$. We find a perfect matching~$M$ in~$F$. The embedding $\phi_4\colon X\dcup Y\to U\dcup V$ of~$H$ into~$G$ is then given by letting $\phi_4=\phi_3$ on~$X$ and for each $y\in Y$ letting $\phi_4(y)=v$ such that $yv\in M$.

\medskip

If $\Delta$ is large, we will see that it is likely that after Stage 1, $Y_1$ is empty, so that $X'=\emptyset$ and Stages 2 and 3 become trivial with $\phi_3=\phi_2=\phi_1$. In general, as we will show, the crucial achievement of Stage~2 is that with positive probability there are no $\Delta^{2\eps}$-problematic vertices left after this stage.

Since $\phi_3$ embeds all of the independent set~$X$ into~$U$, it is immediate from the definition of~$F$ that if the perfect matching~$M$ exists in~$F$, then $\phi_4$ gives an embedding of~$H$ into~$G$. Hence it remains to prove that with positive probability~$\phi_3$ is such that~$F$ has a perfect matching. For this, we shall show that the properties guaranteed by the following lemma imply that Hall's condition is satisfied for~$F$.

\begin{lemma}[Stage~3]
\label{lem:stage3}
  Assuming Setup~\ref{setup} and that we perform Stages~1 to~3 of the randomised algorithm described above, with positive probability the embedding~$\phi_3$ has the following properties.
  \begin{enumerate}[label=\abc]
  \item\label{lem:stage3:a} 
  There are no $2\Delta^{2\eps}$-problematic vertices for~$\phi_3$.
  \item\label{lem:stage3:b}
  For all $v\in V$ we have
  \[\Big|\Big\{y\in Y\colon \phi_3\big(N_H(y)\big)\subset N_G(v)\Big\}\Big|\ge\frac{|Y|}{2\Delta^\eps}\,.\]
  \item\label{lem:stage3:c}
  For all $V'\subset V$ with $|V'|=\lceil\tfrac12|V|\Delta^{-3\eps/2-2}\rceil$ and $\delta_G(U,V')\ge\big(1-8 c(\log\Delta)\Delta^{\eps-1}\big)|V'|$ 
  we have
  \[\Big|\Big\{y\in Y\colon N_G\big(\phi_3(N_H(y))\big)\cap V'=\emptyset\Big\}\Big|\le \frac{|Y|}{2\Delta^{2\eps}}\,.\]
  \end{enumerate}
\end{lemma}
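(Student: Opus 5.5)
The plan is to prove \ref{lem:stage3:a}--\ref{lem:stage3:c} separately, each for the stage at which it is decided, and then to show that Stages~2 and~3 disturb only few vertices of~$Y$. The basic tool throughout is Lemma~\ref{lem:main} with $\eps=\frac13$, applied to $G[U,V]$ or to suitable subgraphs of it. Setup~\ref{setup} gives $2c\le\eps/16$ and $8c\le\eps/16$. Hence the non-degrees $2c\frac{\log\Delta}{\Delta}$ (respectively $8c(\log\Delta)\Delta^{\eps-1}$) meet the hypotheses of Lemma~\ref{lem:main}, the second using the extra slack factor $\Delta^\eps$ on the $A'$ side. For each $y\in Y$, the set $\phi_1(N_H(y)\setminus\tilde X)$ is a uniformly random subset of $U$ of size at most $\Delta$. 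So Lemma~\ref{lem:main} with $A'=U$, $B'=V$ gives $\Prob(y\in Y_1)\le\exp(-\Delta^{1/3})$.

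\emph{Part \ref{lem:stage3:a}.} If $\Delta>(\log 2n)^{3}$, then $\tilde X=\emptyset$ and a union bound over $Y$ shows that $Y_1=\emptyset$ with high probability, so $\phi_3=\phi_1$ and we are done. Otherwise, Markov's inequality gives $|Y_1|\le 2n\exp(-\Delta^{1/3})$ with probability at least $\frac12$, and hence $|X'|\le\Delta|Y_1|$ is tiny. For Stage~2 we use the local lemma (Lemma~\ref{lem:LLLL}), with one bad event for each $y\in N_H(X')$, namely that $y$ is $\Delta^{2\eps}$-problematic for $\phi_2$. There are two cases.
\begin{itemize}
\item If $y\in Y_1$, all of its non-reservoir neighbours lie in $X'$ and are re-embedded into $A':=U\setminus\phi_1(X_1)$.
\item If $y\notin Y_1$, its fixed neighbours already have a common neighbourhood $B'$ with $|B'|\ge\Delta^{-\eps}|V|$. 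Each $a\in U$ misses at most $2c\frac{\log\Delta}{\Delta}|V|\le 2c\frac{\log\Delta}{\Delta}\Delta^{\eps}|B'|$ vertices of $B'$, which is exactly the slack in Lemma~\ref{lem:main}.
\end{itemize}
The set $A'$ contains the image of the reservoir region and is essentially a random set of size at least $|X|/\Delta^{2+3\eps}$. Lemma~\ref{lem:hypgeo} then gives the degree condition from $B'$ into $A'$ with high probability, at least when $\Delta\le(\log 2n)^3$. Therefore each bad event has conditional probability at most $\exp(-\Delta^{1/3})$. Each event depends only on the images of at most $\Delta$ vertices, so the dependency degree is $\mathrm{poly}(\Delta)$; handling the mild global dependence of a random injection needs some care in defining the $p$-dependency graph. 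Finally, in Stage~3 every $y$ has at most one reservoir neighbour, since reservoir vertices are at distance at least~$4$. Adding one vertex removes at most $2c\frac{\log\Delta}{\Delta}|V|\ll\Delta^{-2\eps}|V|/2$ vertices from the common neighbourhood, which costs only the factor~$2$ in \ref{lem:stage3:a}.

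\emph{Part \ref{lem:stage3:b}.} Fix $v\in V$. Under $\phi_1$, every $y$ satisfies $\phi_1(N_H(y))\subset N_G(v)$ with probability at least $(1-3c\frac{\log\Delta}{\Delta})^{\Delta}\ge\Delta^{-4c}\gg\Delta^{-\eps}$. The count of such $y$ is a function of a random injection that changes by at most $\Delta$ when two images are swapped. A McDiarmid-type bound (or Theorem~\ref{thm:freedman} applied to a vertex-exposure martingale) gives concentration with failure probability $\exp(-\Omega(|Y|^2\Delta^{-O(1)}/n))\ll n^{-1}$, using $|Y|\ge n/(2\Delta)$ and $\Delta\le 0.001n^{1/5}$. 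A union bound over $v$ then finishes Stage~1. Stages~2--3 change only those $y\in N_H(X'\cup\tilde X)$, of which there are at most $\Delta|X'|+\Delta|\tilde X|\le|Y|\Delta^{-3\eps}+o(|Y|)$. This is negligible against $|Y|\Delta^{-4c}$.

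\emph{Part \ref{lem:stage3:c}, the main obstacle.} Here we need a union bound over all the sets $V'$, of which there are roughly $\exp\big(|V|\Delta^{-5/2}\log\Delta\big)$. For a fixed $V'$, Lemma~\ref{lem:main} applied to $G[U,V']$ bounds the probability that $N_G(\phi_1(N_H(y)))\cap V'=\emptyset$ by $p:=\exp(-\Delta^{1/3})$. The difficulty is getting a tail of the form $\exp(-k\Delta^{1/3})$ with $k=|Y|\Delta^{-2\eps}/4$; Freedman's bound $\exp(-\Theta(k))$ is too weak.

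First, greedily partition $Y$ into at most $\Delta^2$ classes whose neighbourhoods are pairwise disjoint. Inside one class, reveal $\phi_1$ on the neighbourhoods one at a time. The unused part of $U$ is a uniformly random set. As long as that pool is at least a constant fraction of $U$, it inherits the degree conditions by Lemma~\ref{lem:hypgeo}, so each conditional failure probability is at most $2p$. To keep the pool large, I would order each class so that I only reveal the first half of $X$; that is, apply the argument to two halves of the exposure order separately. Then the number of failures among $m$ sequential trials with conditional probability at most $2p$ is at least $k'$ with probability at most $\binom{m}{k'}(2p)^{k'}\le(2emp/k')^{k'}=\exp(-\Omega(k'\Delta^{1/3}))$. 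With $k'\approx|Y|\Delta^{-2-2\eps}$, the exponent $|Y|\Delta^{-7/3}$ beats the entropy $|V|\Delta^{-5/2}\log\Delta$. Summing over classes and adding the at most $|Y|\Delta^{-3\eps}$ vertices $y$ affected by Stages~2--3 gives the bound $|Y|/(2\Delta^{2\eps})$. Intersecting the three good events, each of probability bounded away from~$0$, completes the proof.
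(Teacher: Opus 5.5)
Your proposal is correct in substance. Its final step in each part is exactly the paper's proof of this lemma: for \ref{lem:stage3:a}, each $y$ has at most one reservoir neighbour, which costs at most $2c\frac{\log\Delta}{\Delta}|V|$ common neighbours; for \ref{lem:stage3:b} and \ref{lem:stage3:c}, only the at most $\Delta|X'|+\Delta|\tilde X|$ vertices of $N_H(X'\cup\tilde X)$ can change status. The difference is that the paper quotes Lemmas~\ref{lem:stage1} and~\ref{lem:stage2} for the properties of $\phi_1$ and $\phi_2$, whereas you re-derive them inline.
\begin{itemize}
\item \textbf{Stage~2.} Your local-lemma argument coincides with the paper's: the same dependency graph, the same two cases $y\in Y_1$ and $y\notin Y_1$, and the reservoir supplying the degree condition into $A'$.
\item \textbf{Part \ref{lem:stage3:b}.} You use a bounded-differences inequality for random injections. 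The Lipschitz constant is $2\Delta$ rather than $\Delta$, since a swap affects two neighbourhoods, but this is harmless. The paper instead colours $Y$ via Hajnal--Szemer\'edi into $\Delta^2$ balanced classes with disjoint neighbourhoods, exposes each class sequentially, and compares with independent Bernoulli trials. Your route imports an outside concentration tool; the paper's stays within Lemma~\ref{lem:hypgeo}.
\item \textbf{Part \ref{lem:stage3:c}.} Your bound $\binom{m}{k'}p^{k'}$ over pre-fixed classes is the same idea as the paper's. The paper instead extracts, from a hypothetical failure set, a subset $Y''$ of size $t=\lceil\frac14|Y|\Delta^{-2\eps-2}\rceil$ with disjoint neighbourhoods, and union-bounds over $Y''$ and $V'$. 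Because only those $t$ neighbourhoods are exposed, $|Z_{i-1}|\le\Delta t\le|U|/2$, so the degree condition from $V'$ into $U\setminus Z_{i-1}$ holds deterministically. You can do the same by exposing only the $k'$ vertices you union-bound over. Your two-halves trick and the appeal to Lemma~\ref{lem:hypgeo} there are then unnecessary.
\end{itemize}

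Two points need tightening.
\begin{itemize}
\item \textbf{The conditioning in the local lemma.} The \emph{care} you flag works as follows. Condition on $\phi_2(X'')$, where $X''$ are the re-embedded neighbours of the non-adjacent events. Then $\phi_2(N_H(y)\cap X')$ is uniform in $U\setminus(\phi_1(X_1)\cup\phi_2(X''))$. Lemma~\ref{lem:main} still applies, since $|X''|\le|X'|$ is negligible compared with $|\tilde X|$.
\item \textbf{Your closing sentence.} It is not valid as stated: events each of probability bounded away from $0$ need not intersect, and the local lemma only gives positive, possibly tiny, probability. Instead, first fix a Stage~1 outcome on which all of the following hold simultaneously:
  \begin{itemize}
  \item Markov's event, of probability at least $\frac12$ (when $\tilde X=\emptyset$, $Y_1=\emptyset$ holds with probability at least $\frac34$, not with high probability);
  \item the $1-o(1)$ events behind \ref{lem:stage3:b} and \ref{lem:stage3:c};
  \item the reservoir degree condition.
  \end{itemize}
  Then apply the local lemma conditionally on that outcome.
\end{itemize}
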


Property~\ref{lem:stage3:a} of this lemma asserts that each vertex $y\in Y$ is allowed to be embedded to many vertices in~$V$. This is complemented by~\ref{lem:stage3:b}, which states that each vertex $v\in V$ is a candidate host for many vertices in~$Y$. Together, these give us a minimum degree condition on the auxiliary bipartite graph $F$. Property~\ref{lem:stage3:c} states that for certain sets $V'\subset V$, only few vertices of~$Y$ cannot be embedded to~$V'$. Equivalently, in $F$ these sets $V'$ dominate most of $Y$. We will show that any large enough subset of $V$ contains one of these special $V'$ and hence this property tells us that any large enough subset of $V$ dominates most of $Y$ in $F$.

In order to prove Lemma~\ref{lem:stage3}, we shall rely on certain properties of the partial embeddings $\phi_1$ and~$\phi_2$, which are summarised in the following lemmas.

\begin{lemma}[Stage~1]
\label{lem:stage1}
  Assuming Setup~\ref{setup} and that we perform Stage~1 of the randomised algorithm described above, with positive probability the embedding~$\phi_1$ has the following properties.
  \begin{enumerate}[label=\abc]
  \item\label{lem:stage1:a} There are at most $2|X|\exp\big(-\Delta^{1-2\eps}\big)$ vertices which are $\Delta^\eps$-problematic for $\phi_1$, and 
  $|X'|\le 2\Delta\exp(-\Delta^{1-2\eps})|X|$.
  \item\label{lem:stage1:b}
  For all $v\in V$ we have
  \[\Big|\Big\{y\in Y\colon \phi_1\big(N_H(y)\setminus\tilde X\big)\subset N_G(v)\Big\}\Big|\ge\frac{|Y|}{\Delta^\eps}\,.\]
  \item\label{lem:stage1:c}
  For all $V'\subset V$ with $|V'|=\lceil\tfrac12|V|\Delta^{-3\eps/2-2}\rceil$ and $\delta_G(U,V')\ge\big(1-8c(\log\Delta)\Delta^{\eps-1}\big)|V'|$ 
  we have
  \[\Big|\Big\{y\in Y\colon N_G\big(\phi_1(N_H(y)\setminus\tilde X)\big)\cap V'=\emptyset\Big\}\Big|\le \frac{|Y|}{4\Delta^{2\eps}}\,.\]
  \item\label{lem:stage1:d}
  For all $v\in V$ we have
  \[\big|N_G(v)\cap(U\setminus\im\phi_1)\big|\ge\Big(1-3c\frac{\log\Delta}{\Delta}\Big)|U\setminus\im\phi_1|\,.\]
  \end{enumerate}
\end{lemma}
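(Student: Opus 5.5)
The plan is to show that each of \ref{lem:stage1:a}--\ref{lem:stage1:d} fails with probability less than $1/4$ (in fact $o(1)$), and then take a union bound. Throughout, $\phi_1$ is a uniform injection of $X\setminus\tilde X$ into $U$. The key observation is that for a fixed $y\in Y$, the set $\phi_1(N_H(y)\setminus\tilde X)$ is a uniform random subset of $U$ of size $t=|N_H(y)\setminus\tilde X|\le\Delta$. This holds unless $t=0$, in which case all properties for $y$ are trivial since the common neighbourhood is all of $V$. So each local property reduces to Lemma~\ref{lem:main}, applied to a suitable bipartite graph $G'$ with $\eps=1/3$.

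For \ref{lem:stage1:a}, apply Lemma~\ref{lem:main} with $A'=U$, $B'=V$ and $G'=G[U,V]$. The Setup gives degrees at least $(1-2c\log\Delta/\Delta)$, which is more than the lemma requires, since $2c=1/250\le \eps/16=1/48$. Hence each $y$ is $\Delta^\eps$-problematic with probability at most $\exp(-\Delta^{1-2\eps})$. The expected number of problematic vertices is then at most $|Y|\exp(-\Delta^{1-2\eps})$, and Markov's inequality gives at most twice this with probability at least $1/2$. To make the failure probability small, we would rather bound the count with failure probability $o(1)$. Every problematic $y$ contributes at most $\Delta$ vertices to $X'$, so $|X'|\le\Delta|Y_1|\le 2\Delta e^{-\Delta^{1-2\eps}}|X|$, using $|Y|\le|X|$.

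For \ref{lem:stage1:b}, fix $v\in V$ and apply Lemma~\ref{lem:main} to the bipartite graph $G'$ between $A'=U$ and $B'=Y$. Here $u\sim y$ means that $u$ lies in the complement of $N_G(v)$ and \emph{is not} compatible with $y$. This needs rephrasing. The cleaner formulation is as follows: $y$ counts iff all of $\phi_1(N_H(y))$ lies in $N_G(v)\cap U$. Since $|U\setminus N_G(v)|\le 2c\frac{\log\Delta}{\Delta}|U|$ and $t\le\Delta$, the expected number of good $y$ is at least $(1-2c\log\Delta/\Delta\cdot\ldots)^\Delta|Y|\ge \Delta^{-2c}|Y|\ge 2|Y|/\Delta^{\eps}$. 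For concentration, expose $\phi_1$ vertex by vertex along $X\setminus\tilde X$. Changing one image affects at most $\Delta$ of the indicators, so a bounded-differences or Freedman argument (Theorem~\ref{thm:freedman} applied to the count of \emph{bad} $y$) gives failure probability $\exp(-\Omega(|Y|/\Delta^{O(1)}))$. Since $|Y|\ge n/(2\Delta)$ and $\Delta\le 0.001n^{1/5}$, this is $o(1/n)$, so a union bound over $v$ works. Property \ref{lem:stage1:d} is a direct hypergeometric bound: $U\setminus\im\phi_1$ is a uniform random subset of $U$ of size $|\tilde X|$. When $\tilde X\neq\emptyset$ this has size at least $|X|/\Delta^{3}\ge n^{2/5}$, say, so Lemma~\ref{lem:hypgeo} with $\xi=1/2$ on $U\setminus N_G(v)$, followed by a union bound over $v$, suffices.

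The main obstacle is \ref{lem:stage1:c}, because the union bound now ranges over all sets $V'$ of size $k=\lceil\frac12|V|\Delta^{-3\eps/2-2}\rceil$, of which there are about $\exp(k\log(e|V|/k))$. Fix such a $V'$ and apply Lemma~\ref{lem:main} with $A'=U$ and $B'=V'$. The hypothesis on $\delta_G(U,V')$ is exactly of the form $(1-\frac{\eps\log\Delta}{16\Delta}\Delta^{\eps})$, since $8c\le\eps/16$. The degrees from the $V'$ side are inherited from $\delta_G(V,U)$. Hence the probability that a given $y$ has empty (indeed, small) common neighbourhood in $V'$ is at most $\exp(-\Delta^{1/3})$. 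The count of bad $y$ then has mean at most $|Y|e^{-\Delta^{1/3}}$, far below the target $|Y|/(4\Delta^{2/3})$. Freedman's inequality (Theorem~\ref{thm:freedman}), with the filtration revealing $\phi_1$ one vertex at a time, gives a failure probability of roughly $\exp(-\Omega(|Y|\Delta^{-2/3}/\Delta))$, because each step influences at most $\Delta$ indicators. One technicality is that conditional expectations must be controlled for a non-uniform partial injection; I would handle this by grouping $Y$ into classes of vertices with pairwise disjoint neighbourhoods (at most $\Delta^2$ classes) and using the near-independence of disjoint sets under a random injection. The bound must beat $\exp(k\log(e\Delta^{3}))$, where $k\approx|Y|\Delta^{-5/2}$. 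This works since $|Y|\Delta^{-5/3}\gg |Y|\Delta^{-5/2}\log\Delta$, and the exponent is large because $|Y|\ge n/(2\Delta)$ and $\Delta\le 0.001n^{1/5}$. Checking that this concentration genuinely beats the entropy term, uniformly over all $V'$, is the step I expect to require the most care.
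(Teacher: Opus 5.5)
Parts (a) and (d) match the paper. For (a), Markov only gives the property with probability at least $1/2$, not failure below $1/4$ as your plan states. That is still enough, because (b)--(d) fail with probability $o(1)$.

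For (b), a bounded-differences (Azuma/McDiarmid) inequality for a uniform random injection works, since swapping two images changes at most $2\Delta$ indicators. Theorem~\ref{thm:freedman} does not work here: it only controls the event of exceeding twice an upper bound on the mean, and for the count of bad $y$ that bound can exceed $|Y|$. The paper's route for (b) is different: it splits $Y$ into $\Delta^2$ classes with pairwise disjoint neighbourhoods, exposes the sets $\phi_1(N_H(y)\setminus\tilde X)$ one at a time, and compares with independent Bernoulli variables of mean $\Delta^{-8c}$.

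The genuine gap is in (c).

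\emph{Vertex-by-vertex exposure is invalid.} If $\phi_1$ is revealed one vertex at a time, Theorem~\ref{thm:freedman} requires $\sum_i\Exp[Y_i|\cF_{i-1}]\le\mu$ on \emph{every} outcome. But given a partial embedding of $N_H(y)$, the probability that $y$ ends up with no common neighbour in $V'$ is not controlled by Lemma~\ref{lem:main}. It equals $1$ if the neighbours already placed have no common neighbour in $V'$. So this sum is small only on typical histories, and no useful deterministic $\mu$ exists.

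\emph{Your repair makes the hypothesis hold but weakens the exponent too much.} Grouping $Y$ into classes with pairwise disjoint neighbourhoods, and exposing whole neighbourhood images in turn, does give conditional probabilities at most $p=e^{-\Delta^{1/3}}$ (Lemma~\ref{lem:main} on $G[U\setminus Z,V']$, with $Z$ the images already revealed), with $R=1$. But now a failure of (c) only forces some class to contain about $\tfrac14|Y|\Delta^{-8/3}$ bad vertices. Theorem~\ref{thm:freedman} then gives only $\exp(-\Omega(|Y|\Delta^{-8/3}))$. The union bound, however, ranges over up to $\binom{|V|}{|V'|}=\exp(\Theta(|Y|\Delta^{-5/2}\log\Delta))$ sets $V'$. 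Since $\Delta^{-8/3}\ll\Delta^{-5/2}\log\Delta$, the union bound fails. Your comparison with $|Y|\Delta^{-5/3}$ used the exponent from the invalid vertex-by-vertex version.

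\emph{What is missing} is a tail bound whose exponent carries the factor $\log(1/p)=\Delta^{1/3}$; a bound of the form $\exp(-\mu/4R)$ discards it. The paper obtains this directly:
\begin{itemize}
\item From a bad set of size more than $\tfrac14|Y|\Delta^{-2/3}$, it greedily extracts $Y''$ of size $t=\lceil\tfrac14|Y|\Delta^{-8/3}\rceil$ with pairwise disjoint neighbourhoods.
\item Multiplying the conditional probabilities gives $\Prob[\text{every } y\in Y'' \text{ is bad}]\le e^{-\Delta^{1/3}t}$.
\item A union bound over $Y''$ and $V'$, using $|V'|\le 4\Delta^{1/6}t$, leaves an exponent of at most $8t\Delta^{1/6}\log(en/t)-\Delta^{1/3}t\to-\infty$.
\end{itemize}
Equivalently, you could dominate the bad count in each class by $\mathrm{Bin}(m,p)$ and use $\Prob[\mathrm{Bin}(m,p)\ge s]\le(emp/s)^s$.
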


Properties \ref{lem:stage1:a}--\ref{lem:stage1:c} of this lemma are the Stage~1 analogues to Lemma \ref{lem:stage3}\ref{lem:stage3:a}--\ref{lem:stage3:c}, with the difference that we still may have some problematic vertices after Stage~1 (and hence~$X'$ may be non-empty). Property~\ref{lem:stage1:d} asserts that each vertex in~$V$ still has many neighbours in the part of~$U$ not used by $\phi_1$.
Observe that if $\Delta>\big(\log(2n)\big)^{1/(1-2\eps)}$
then~\ref{lem:stage1:a} states $|X'|=0$, so that $\im\phi_1=U$ (recall that in this case we chose the reservoir $\tilde{X}$ to be empty) and $\phi_1$ already satisfies the conclusion of Lemma~\ref{lem:stage3}.

We shall establish Lemma~\ref{lem:stage1} by using concentration inequalities for our random choice of~$\phi_1$. For analysing the vertices re-embedded in Stage~2, on the other hand, we shall rely on the Lov\'asz local lemma to obtain the following.

\begin{lemma}[Stage~2]
\label{lem:stage2}
  Assuming Setup~\ref{setup} and that we perform Stages~1 to~2 of the randomised algorithm described above, with positive probability the embedding~$\phi_2$ is such that there are no $\Delta^{2\eps}$-problematic vertices for $\phi_2$.
\end{lemma}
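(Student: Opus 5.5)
We will apply the Lov\'asz local lemma (Lemma~\ref{lem:LLLL}) to the random choice of $\phi_2$ on $X'$, conditioning on an outcome of Stage~1 satisfying Lemma~\ref{lem:stage1}. If $X'=\emptyset$ (in particular if $\Delta>(\log(2n))^{1/(1-2\eps)}$), then $\phi_2=\phi_1$ and no vertex is $\Delta^\eps$-problematic, hence none is $\Delta^{2\eps}$-problematic, and we are done. So assume $\Delta$ is small and $X'\neq\emptyset$.

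\emph{Which vertices can be problematic.} A vertex $y\in Y$ can be $\Delta^{2\eps}$-problematic for $\phi_2$ only if $N_H(y)\cap X'\ne\emptyset$. Otherwise $\phi_2$ and $\phi_1$ agree on $N_H(y)\setminus\tilde X$, and $y\notin Y_1$. Indeed, if $y\in Y_1$ then all its neighbours outside $\tilde X$ lie in $X'$. For each $y$ with $N_H(y)\cap X'\neq\emptyset$ we introduce the bad event $B_y$ that $y$ is $\Delta^{2\eps}$-problematic for $\phi_2$.

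\emph{Probability bound.} Fix such a $y$ and write $S_y=N_H(y)\cap X'$ and $R_y=N_H(y)\cap X_1$. We apply Lemma~\ref{lem:main} to the bipartite graph $G'$ with parts $A'=(U\setminus\phi_1(X_1))\cap N_G(\phi_1(R_y))$ and $B'=N_G(\phi_1(R_y))\cap V$.
\begin{itemize}
\item If $y\notin Y_1$, then $|B'|>|V|\Delta^{-\eps}$ already; if $y\in Y_1$ and $R_y$ is nonempty, $B'$ could be small, so we would instead take $B'=V$ and $A'=U\setminus\phi_1(X_1)$.
\item The minimum-degree hypotheses come from Lemma~\ref{lem:stage1}\ref{lem:stage1:d} and Setup~\ref{setup}, after removing the small image sets. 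Here $|X'|$ is tiny by Lemma~\ref{lem:stage1}\ref{lem:stage1:a}, and $|U\setminus\phi_1(X_1)|\ge|\tilde X|\ge|X|/\Delta^{3}$.
\item These hypotheses need $2c\log\Delta/\Delta\le \frac{\eps\log\Delta}{16\Delta}\Delta^{\eps}$ on the $A'$ side and a similar bound on the $B'$ side, with the slack coming from $c=1/500$.
\end{itemize}
The image $\phi_2(S_y)$ is a uniform random $|S_y|$-subset of $U\setminus\phi_1(X_1)$. Conditioning on it landing in $A'$ costs at most a constant factor. We then get a common neighbourhood of size at least $\Delta^{-\eps}|B'|\ge\Delta^{-2\eps}|V|$ except with probability about $\exp(-\Delta^{1-2\eps})$, possibly times a small polynomial factor. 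So $p\approx 2\exp(-\Delta^{1-2\eps})$.

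\emph{Dependency.} Injectivity makes all events mildly dependent, which is the main obstacle. I would handle it by fixing an ordering and showing the $p$-dependency condition directly. Conditioning on the images of $X'\setminus N_H(N_H(y))$ and on any non-adjacent events only removes at most $|X'|$ vertices from the pool. Given these, $\phi_2(S_y)$ is still uniform on a set that loses only a $\Delta\exp(-\Delta^{1-2\eps})$-fraction (by Lemma~\ref{lem:stage1}\ref{lem:stage1:a}), so the same bound holds with $p$ doubled. Concretely, let $B_y$ and $B_{y'}$ be adjacent in $D$ when $N_H(y)\cap X'$ and $N_H(y')\cap X'$ intersect. Events not adjacent to $B_y$ depend only on the images of vertices in $X'\setminus S_y$. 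Conditioning on any such configuration leaves $\phi_2(S_y)$ uniform among injections into $U\setminus(\phi_1(X_1)\cup\phi_2(X'\setminus S_y))$. Since $|X'|$ is much smaller than $|\tilde X|$, because $\exp(-\Delta^{1-2\eps})\Delta^{3+3\eps}\ll1$ for $\Delta\ge 2e^{1000}$, this is exactly the situation of the probability bound, uniformly over the conditioning. Then $\Delta(D)\le\Delta^2$, and $4\Delta^2\cdot 2\exp(-\Delta^{1-2\eps})<1$ for $\Delta\ge 2e^{1000}$. Lemma~\ref{lem:LLLL} then gives that with positive probability no $B_y$ occurs, as required.
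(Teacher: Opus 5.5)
Your skeleton is the paper's. You condition on a good Stage~1 outcome and note that only $y$ with $N_H(y)\cap X'\neq\emptyset$ can be $\Delta^{2\eps}$-problematic. You then apply Lemma~\ref{lem:LLLL} with $p$ of order $\exp(-\Delta^{1-2\eps})$, a dependency graph of maximum degree at most $\Delta^2$, and Lemma~\ref{lem:main} with $B'=N_G(\phi_1(N_H(y)\cap X_1))\cap V$. Your dependency graph (adjacency via common neighbours in $X'$) and your conditioning on all of $\phi_2(X'\setminus S_y)$ are harmless variants of the paper's choices.

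However, the central probability estimate, as written, does not work. You take $A'=(U\setminus\phi_1(X_1))\cap N_G(\phi_1(R_y))$ and claim that conditioning on $\phi_2(S_y)\subseteq A'$ costs only a constant factor. This fails for two reasons.
\begin{enumerate}
\item Setup~\ref{setup} controls only $U$--$V$ degrees, not degrees inside $U$. Up to $\Delta$ vertices, each with up to $cn\log\Delta/\Delta$ non-neighbours, can have a tiny or even empty common neighbourhood in $U$ (note $c\log\Delta>2$). Then neither $|A'|\ge 2\Delta$ nor the $B'$-to-$A'$ degree hypothesis of Lemma~\ref{lem:main} is available.
\item Even a constant lower bound on $\Prob[\phi_2(S_y)\subseteq A']$ would not bound $\Prob[B_y]$ from above, because outcomes with $\phi_2(S_y)\not\subseteq A'$ are left unaccounted for.
\end{enumerate}

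The restriction is also unnecessary: $X$ is independent in $H$, so nothing requires $\phi_2(S_y)$ to land in $N_G(\phi_1(R_y))$. Let $A'$ be the entire available pool $U\setminus(\phi_1(X_1)\cup\phi_2(X'\setminus S_y))$, as you in fact do in your dependency paragraph. Then:
\begin{itemize}
\item $N_G(T)\cap B'$ is exactly $N_G(\phi_2(N_H(y)\setminus\tilde X))\cap V$;
\item $|A'|\ge|\tilde X|>2\Delta$;
\item the degree conditions follow from Setup~\ref{setup} and Lemma~\ref{lem:stage1}\ref{lem:stage1:a},\ref{lem:stage1:d} as you indicate;
\item $p=\exp(-\Delta^{1-2\eps})$ comes out directly, with no doubling.
\end{itemize}
With this repair your argument is the paper's.

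Two smaller slips:
\begin{itemize}
\item The case ``$y\in Y_1$ and $R_y\neq\emptyset$'' cannot occur. Membership $y\in Y_1$ forces $N_H(y)\setminus\tilde X\subseteq X'$, so $R_y=\emptyset$ and $B'=V$ automatically. Were the case to occur, replacing $B'$ by $V$ would be wrong, since it discards the constraint from $\phi_1(R_y)$.
\item The $A'$-side hypothesis you need is $2c\frac{\log\Delta}{\Delta}|V|\le\frac{\eps\log\Delta}{16\Delta}\Delta^{\eps}|B'|$. Via $|B'|\ge\Delta^{-\eps}|V|$ this reduces to $2c\le\eps/16$. The inequality you wrote drops the $\Delta^{\eps}$ loss coming from $|B'|<|V|$.
\end{itemize}
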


We defer the proofs of Lemma~\ref{lem:stage3}, Lemma~\ref{lem:stage1}, and Lemma~\ref{lem:stage2} to Section~\ref{sec:stages}. We conclude this section by showing that Lemma~\ref{lem:stage3} implies Theorem~\ref{thm:upper-bound}.

\begin{proof}[Proof of Theorem~\ref{thm:upper-bound}]
  Given an $n$-vertex graph $G$ with $\delta(G)\ge \big(1-c(\log\Delta)\Delta^{-1}\big)n$, and an $n$-vertex bipartite graph $H$ with maximum degree $\Delta$ and bipartition classes $X$ and $Y$. Without loss of generality we may assume $|X|\ge|Y|\ge|X|/\Delta$. To see that this is true, observe that if $|Y|<|X|/\Delta$, then necessarily $X$ contains vertices of degree zero, which we can move to $Y$ until the inequalities hold.

  If $\Delta\le \big(\log(2n)\big)^{1/(1-2\eps)}$, we greedily construct a reservoir $\tilde X\subset X$ of vertices of distance at least~$4$ and with $|\tilde X|=\lceil |X|/\Delta^{2+3\eps}\rceil$, which is possible since $\lceil n\Delta^{-2-3\eps}\rceil<\tfrac{|X|}{\Delta(\Delta-1)+1}$. If $\Delta> \big(\log(2n)\big)^{1/(1-2\eps)}$, we set $\tilde X=\emptyset$.
  Next, we apply
Lemma~\ref{lem:makebip} with $a=|X|$ and $\rho=\frac{c\log\Delta}{\Delta}$ 
to obtain a partition of $V(G)$ into two sets $U$ and $V$ with
$|U|=|X|$ and $|V|=|Y|$ such that
$\delta_G(U,V)\ge\big(1-2c\frac{\log\Delta}{\Delta}\big)|V|$
and $\delta_G(V,U)\ge\big(1-2c\frac{\log\Delta}{\Delta}\big)|U|$. We can apply this lemma since
\[\rho a \ge \rho(n-a) = \frac{c\log\Delta}{\Delta}|Y|
   \ge
    \frac{c\log\Delta}{\Delta(\Delta+1)}n
    \ge
   \frac{c\log(2\cdot e^{1000})}{2 (0.001n^{1/5})^2}n
   \ge 6\log n\,.
\]
 Hence, we are now exactly in 
Setup~\ref{setup}.
  
  We perform Stages~1 to~3 of the randomised algorithm described above. Suppose that the constructed embedding $\phi_3\colon X\to U$ satisfies the conclusions of Lemma~\ref{lem:stage3}.
  We need to show that for this choice of~$\phi_3$ the auxiliary bipartite graph~$F$ from Stage~4 of our algorithm contains a matching saturating~$V$. For this, we verify Hall's condition. Indeed, let $W\subset V$ be arbitrary. By Lemma~\ref{lem:stage3}\ref{lem:stage3:b} we have that $d_F(v) \geq \frac{|Y|}{2\Delta^\eps}=\frac{|V|}{2\Delta^\eps}$ and therefore,  if $|W|\le \frac{|V|}{2\Delta^\eps}$ we have that $|\bigcup_{v\in W}N_F(v)|\ge|W|$. If 
  $|W|\ge\Big(1-\frac{1}{2\Delta^{2\eps}}\Big)|V|$ then since by Lemma~\ref{lem:stage3}\ref{lem:stage3:a} there are no $2\Delta^{2\eps}$-problematic vertices for~$\phi_3$, each $y\in Y$ has degree $d_F(y)>\frac{|V|}{2\Delta^{2\eps}}$ and hence has a neighbour in $W$. We conclude that in this case $|\bigcup_{v\in W}N_F(v)|=|Y|\ge|W|$.

  Finally, suppose $\frac{|V|}{2\Delta^\eps}<|W|<\Big(1-\frac{1}{2\Delta^{2\eps}}\Big)|V|$. 
  By Setup~\ref{setup}, every vertex of $U$ has at most $2c\frac{\log\Delta}{\Delta}|V|$ non-neighbours in $V$, at most all of which are in $W$. Using the lower bound on $|W|$, this says every vertex of $U$ has at most $4 c(\log\Delta)\Delta^{\eps-1}|W|$ non-neighbours in $W$.
  We now choose a uniform random subset $W'$ of $W$ of size $\lceil\frac12|V|\Delta^{-3\eps/2-2}\rceil$. By Lemma~\ref{lem:hypgeo} with $\xi=1$ and the union bound, with probability at least 
  $1-n\cdot 2\exp\big(-\tfrac13\cdot 4c(\log\Delta)\Delta^{\eps-1}|W'|\big)$
  every vertex of $U$ has at most $8 c(\log\Delta)\Delta^{\eps-1}|W'|$ non-neighbours in $W'$. 
  This probability is positive since $|W'|\ge\lceil\frac12\frac{n}{\Delta+1}\Delta^{-3\eps/2-2}\rceil$
  and $\Delta\le 0.001n^{1/5}$, and therefore we can
  fix a $W'$ with this property. 
  
  By Lemma~\ref{lem:stage3}\ref{lem:stage3:c}, at most $|Y|/(2\Delta^{2\eps})$ vertices of $Y$ do not have an $F$-neighbour in $W'$, so \[\bigg|\bigcup_{v\in W}N_F(v)\bigg|\ge
  \bigg|\bigcup_{v\in W'}N_F(v)\bigg|\ge
  |Y|-\frac{|Y|}{2\Delta^{2\eps}}=\Big(1-\frac{1}{2\Delta^{2\eps}}\Big)|V|\ge|W|\,.\]

  This verifies Hall's condition for $F$. As described in Stage~4 of our algorithm, we now fix a perfect matching $M$ of $F$, and let $\phi_4\colon H\to G$ extend $\phi_3$, with $\phi_4(y)$ the matching partner of $y$ in $M$ for each $y\in Y$. By construction, $\phi_4$ is an embedding of $H$ into $G$.
\end{proof}

\section{The probability of having a small common neighbourhood}
\label{sec:lemma}

In this section we prove our main technical lemma (Lemma~\ref{lem:main}), which we shall need in the proofs of Lemma~\ref{lem:stage1} and Lemma~\ref{lem:stage2}. 
\begin{proof}[Proof of Lemma~\ref{lem:main}]
    Observe that it is enough to prove the statement for $t=\Delta$. Indeed, for an arbitrary $t\le\Delta$ we can generate a uniform random $\Delta$-set $T^{**}$ in $A'$ as the union of a uniform random $t$-set $T^*$ in $A'$ and a uniform random $(\Delta-t)$-set in $A'\setminus T^*$. The event that $T^*$ has a small common neighbourhood is contained in the event that $T^{**}$ has a small common neighbourhood, so it is enough to bound the probability of the latter event.
    
    Consider the following random process which generates a uniform random distribution over all sets $T\subset A'$ of size $\Delta$. We pick, one after another, distinct vertices $u_1,\dots,u_{\Delta}$ in $A'$ uniformly at random. Let $T=\{u_1,\dots,u_{\Delta}\}$, and set $S_0=B'$ and $S_i=S_{i-1}\cap N_{G'}(u_i)$ for each $i\in[\Delta]$. 

    We next define random variables $X_i$ for $i\in[\Delta]$ by setting
    \[X_i=\begin{cases}-\log \frac{|S_i|}{|S_{i-1}|}\quad&\text{if $|S_{i-1}|\ge \Delta^{-\eps}|B'|$}\,,\\ 0&\text{otherwise\,.}\end{cases}
    \]
    Let $\cE$ be the bad event that $|N_{G'}(T)|=|S_\Delta|\le\Delta^{-\eps}|B'|$.
    For any outcome in $\cE$, there is some smallest $t$ such that $|S_t|\le\Delta^{-\eps}|B'|$. 
    Note that $\sum_{i=1}^tX_i=\log|S_0|-\log|S_t|$.    
    It follows that in this event we have
    \[\sum_{i=1}^\Delta X_i\geq\sum_{i=1}^tX_i=\log|S_0|-\log|S_t|\ge\log|B'|-\log\big(\Delta^{-\eps}|B'|\big) =\eps\log\Delta\,.\]
    Therefore, to prove the lemma it suffices to show that
    \[\Prob\bigg[\sum_{i=1}^\Delta X_i\ge \eps\log\Delta\bigg]\le\exp\big(-\Delta^{1-2\eps}\big)\,.\]

    To establish this bound, we shall use Theorem~\ref{thm:freedman} with
    \[R=\frac18\eps\Delta^{2\eps-1}\log\Delta\quad\text{and}\quad
      \mu=\frac12\eps\log\Delta\,\]
    on the probability space $\Omega$ that contains all possible sequences $u_1,\dots,u_{\Delta}$ of picked vertices and with the filtration given by letting $\cF_i$ be the partition of $\Omega$ obtained from all possible choices of $u_1,\dots,u_i$. Then, clearly $X_i$ is $\cF_i$-measurable.
    
    We need to estimate the range and conditional expectation of the variables $X_i$. For this, we may assume $|S_{i-1}|\ge\Delta^{-\eps}|B'|$, since otherwise $X_i=0$ and our estimates will hold trivially. 
    Observe first that, whatever $u_i\in A'$ we pick, at most all of its $\frac1{16}\eps\Delta^{\eps-1}(\log\Delta)|B'|$ non-neighbours are in $S_{i-1}$, so we have
    \begin{equation}\label{eq:Si}
    \frac{|S_{i-1}\setminus N_{G'}(u_i)|}{|S_{i-1}|}
    \le
    \frac{\eps\Delta^{\eps-1}(\log\Delta)|B'|}{16|S_{i-1}|} 
    \le\frac{\eps\Delta^{\eps-1}(\log\Delta)|B'|}{16\Delta^{-\eps}|B'|} =\frac1{16}\eps\Delta^{2\eps-1}\log\Delta\le\frac12\,,
    \end{equation}
    where the last inequality uses that  $\eps\in(0,\tfrac13]$ and $\Delta\ge 2$.
    Hence, using our logarithm 
    estimate~\eqref{eq:log} with
$x=\frac{|S_{i-1}\setminus N_{G'}(u_i)|}{|S_{i-1}|}$, we conclude
    \begin{equation}\label{eq:Xi}
    X_i=-\log\Big(\frac{|N_{G'}(u_i)\cap S_{i-1}|}{|S_{i-1}|}\Big)=-\log\Big(1-\frac{|S_{i-1}\setminus N_{G'}(u_i)|}{|S_{i-1}|}\Big)
    \le 2\frac{|S_{i-1}\setminus N_{G'}(u_i)|}{|S_{i-1}|}
    \,.
    \end{equation}
    For the range of $X_i$ we immediately infer
    \begin{equation*}
    0\le X_i
    \leByRef{eq:Xi} 2\frac{|S_{i-1}\setminus N_{G'}(u_i)|}{|S_{i-1}|}
    \leByRef{eq:Si} \frac18\eps\Delta^{2\eps-1}\log\Delta=R\,.
    \end{equation*}
    For the conditional expectation, assume that $u_1,\dots,u_{i-1}$ and therefore $S_{i-1}$ are already fixed and thus determine the part of $\cF_{i-1}$ we are in. 
    We want to bound
    \[\Exp[X_i|\cF_{i-1}]\leByRef{eq:Xi} \Exp\bigg[2\frac{|S_{i-1}\setminus N_{G'}(u_i)|}{|S_{i-1}|}\bigg|\cF_{i-1}\bigg]\,.\]
    For this, note that each vertex of $S_{i-1}$ has at most \[\frac{1}{16}\eps\Delta^{-1}(\log\Delta)|A'|\le \frac18\eps\Delta^{-1}(\log\Delta)\big|A'\setminus\{u_1,\dots,u_{i-1}\}\big|\]
    non-neighbours in $A'\setminus\{u_1,\dots,u_{i-1}\}$, 
    where the inequality uses $|A'|\ge 2\Delta$.
    So the number of non-edges between $A'\setminus\{u_1,\dots,u_{i-1}\}$ and $S_{i-1}$ is at most $\tfrac18\eps\Delta^{-1}(\log\Delta)|A'\setminus\{u_1,\dots,u_{i-1}\}||S_{i-1}|$. 
    Since~$u_i$ is picked uniformly from $A'\setminus\{u_1,\dots,u_{i-1}\}$, we thus have
    \[\Exp\big[|S_{i-1}\setminus N_{G'}(u_i)|\,\big|\cF_{i-1}\big]\le \frac18\eps\Delta^{-1}(\log\Delta)|S_{i-1}|
    \]
    and hence
    \[\Exp[X_i|\cF_{i-1}]\le \Exp\bigg[2\frac{|S_{i-1}\setminus N_{G'}(u_i)|}{|S_{i-1}|}\bigg|\cF_{i-1}\bigg]\le \frac14\eps\Delta^{-1}\log\Delta\le\frac{\mu}{\Delta}\,. \]
    We conclude from Theorem~\ref{thm:freedman} that
    \[\Prob\Big[\sum_{i=1}^\Delta X_i\ge \eps\log\Delta\Big]
    \le\exp\Big(-\frac{\mu}{4R}\Big)
    =\exp\Big(-\frac{\tfrac12\eps\log\Delta}{4\cdot \tfrac18\eps\Delta^{2\eps-1}\log\Delta}\Big)
    =\exp\big(-\Delta^{1-2\eps}\big)\,,
    \]
    as desired.
\end{proof}

\section{Analysing the stages of our embedding algorithm}
\label{sec:stages}

In this section we first prove Lemma~\ref{lem:stage1}, then Lemma~\ref{lem:stage2}, and then Lemma~\ref{lem:stage3}. 
Lemma~\ref{lem:stage1} handles Stage~1, in which we pick a uniform random embedding of $X\setminus\tilde{X}$ into $U$. The idea of its proof is as follows. We use the Main Lemma and Markov's inequality to argue that there are likely to be few problematic vertices as required for~\ref{lem:stage1:a}.
For~\ref{lem:stage1:b}, we fix $v\in V$ and think of the random embedding as being generated by embedding sets $N_H(y)$, for $y\in Y$, one after another uniformly at random to the still available vertices of $U$. We then estimate the probability that $N_H(y)$ will be embedded to $N_G(v)$, and show it is likely that about the expected number of vertices in $Y$ have all their neighbours embedded to $N_G(v)$. To avoid dependencies, we actually split $Y$ into sets of vertices at pairwise distance more than $2$ and argue that concentration holds for each set of vertices individually.

For~\ref{lem:stage1:c}, the idea is to use the Main Lemma to argue that for a given $V'$ it is very unlikely that the following happens: for a large number of vertices $y\in Y$, we consistently happen to embed $N_H(y)$ to a set with few common neighbours in $V'$. As with~\ref{lem:stage1:b}, we want to avoid dependency and hence what we actually do, given a large subset of $Y$, is select greedily a large subset whose pairwise distance is more than $2$. We then take a union bound over all $V'$ of the size specified in~\ref{lem:stage1:c}. We remark that we would not be able to take a union bound over all much larger sets, which is the reason why we need the slightly strange-looking statement~\ref{lem:stage1:b}.
Finally,~\ref{lem:stage1:d} is an easy application of concentration of the hypergeometric distribution.

\begin{proof}[Proof of Lemma~\ref{lem:stage1}]
 Given $H$ with partition $X\dcup Y$, a set $\tilde X\subset X$, and $G$ with its partition $V(G)=U\dcup V$ as in Setup~\ref{setup}, recall that $\phi_1$ is a uniform random map from $X\setminus\tilde X$ to $U$. In particular, each subset of $X\setminus\tilde X$ is embedded to a uniform random set of the same size in $U$. We shall show that~\ref{lem:stage1:a} holds with probability at least $1/2$, and that the probability of~\ref{lem:stage1:b} or~\ref{lem:stage1:c} or~\ref{lem:stage1:d} not holding tends to zero as~$n$ tends to infinity.

 We first establish that~\ref{lem:stage1:a} holds with probability at least $1/2$. Given $y\in Y$, let $S_y=N_H(y)\setminus\tilde X$. Then $\phi_1(S_y)$ is a uniform random subset of $U$ of size $|S_y|\le\Delta$. We apply Lemma~\ref{lem:main} with input $\eps$ and $\Delta$ to $(U,V)$, which we can do since $\eps=1/3$ implies $2c=2/500<\eps/16$ and thus Setup~\ref{setup} gives
 \begin{align*}
    \delta_G(U,V)&\ge\Big(1-2c\frac{\log\Delta}{\Delta}\Big)|V|
    \ge
\Big(1-\frac{\eps\log\Delta}{16\Delta}\Delta^{\eps}\Big)|V|   
    \,, \\
\intertext{and}
 \delta_G(V,U)&\ge\Big(1-2c\frac{\log\Delta}{\Delta}\Big)|U|\ge \Big(1-\frac{\eps\log\Delta}{16\Delta}\Big)|U|\,. 
 \end{align*}
 This lemma tells us that the probability that $\phi_1(S_y)$ has fewer than $\Delta^{-\eps}|V|$ common neighbours in~$V$ is at most $\exp\big(-\Delta^{1-2\eps}\big)$. Summing over $y\in Y$, the expected number of $\Delta^{\eps}$-problematic vertices for $\phi_1$ is at most $|Y|\exp\big(-\Delta^{1-2\eps}\big)$. In particular, by Markov's inequality, with probability at least $\tfrac12$ we have $|Y_1|\le 2|Y|\exp\big(-\Delta^{1-2\eps}\big)$, and when this good event occurs we have $|X'|\le \Delta|Y_1|\le \Delta\cdot 2|X|\exp\big(-\Delta^{1-2\eps}\big)$, using that $|X|\ge|Y|$.

 We next show that~\ref{lem:stage1:b} fails with probability tending to~$0$. Drawing a graph $F$ on $Y$ in which $yy'$ is an edge if $y$ and $y'$ have a common neighbour in $H$, we see $\Delta(F)\le\Delta(\Delta-1)$ and hence by the Hajnal-Szemer\'edi theorem there is a proper vertex colouring of $F$ with $\Delta^2$ colours in which parts differ in size by at most $1$. In particular all parts have size between $\tfrac12|Y|\Delta^{-2}$ and $2|Y|\Delta^{-2}$, and any pair of vertices in such a part have no common neighbours in~$H$.
 Fix $v$ and a part $Y^*$ of this colouring. We now estimate the probability of the event $\cB(v,Y^*)$ that fewer than $\tfrac12 |Y^*|\Delta^{-8c}$ vertices $y\in Y^*$ satisfy $\phi_1(N_H(y)\setminus\tilde X)\subseteq N_G(v)$. For this, enumerate $Y^*=\{y_1,\dots,y_t\}$ arbitrarily, and observe that $\phi_1$ distributes the sets $N_H(y_i)\setminus\tilde X$ as the following random process since the $N_H(y_i)\setminus\tilde X$ are pairwise disjoint: for each time $1\le i\le t$ in turn, choose a uniform random subset of $U\setminus Z_{i-1}$ of size $|N_H(y_i)\setminus\tilde X|$, where $Z_{i-1}$ is the set of vertices chosen at time up to $i-1$.

 Since $|Z_{i-1}|\le\Delta|Y^*|\le2\Delta^{-1}|Y|\le 2\Delta^{-1}|X|=2\Delta^{-1}|U|$ we have
$|U\setminus Z_{i-1}|\ge(1-2\Delta^{-1})|U|$ and thus
$|Z_{i-1}|\le 2\Delta^{-1}|U\setminus Z_{i-1}|(1-2\Delta^{-1})^{-1}
\le c(\log\Delta)\Delta^{-1}|U\setminus Z_{i-1}|$, where we use that $\Delta\ge 2\cdot e^{1000}$ by Setup~\ref{setup}.
This implies
 \begin{equation*}\begin{split}
 \deg_G(v,U\setminus Z_{i-1})
 &\ge\left(1-2c\frac{\log\Delta}{\Delta}\right)|U|-|Z_{i-1}|
 \ge\left(1-2c\frac{\log\Delta}{\Delta}\right)|U\setminus Z_{i-1}|-c\frac{\log\Delta}{\Delta}|U\setminus Z_{i-1}|  \\
 &=\left(1-3c\frac{\log\Delta}{\Delta}\right)|U\setminus Z_{i-1}|\,.
 \end{split}\end{equation*}
 It follows that the probability that, conditioning on $Z_{i-1}$, a uniform random subset of $U\setminus Z_{i-1}$ of size $|N_H(y_i)\setminus\tilde X|$ is contained in $N_G(v)$, is at least
 \begin{equation*}\begin{split}
\frac{\binom{(1-3c(\log\Delta)\Delta^{-1})|U\setminus Z_{i-1}|}{\Delta}}{\binom{|U\setminus Z_{i-1}|}{\Delta}}&
\ge\left(\frac{(1-3c(\log\Delta)\Delta^{-1})|U\setminus Z_{i-1}|-\Delta}{|U\setminus Z_{i-1}|}\right)^{\Delta}
\ge \big(1-4c(\log\Delta)\Delta^{-1}\big)^{\Delta}\\
&\ge \exp(-8c\log\Delta)=\Delta^{-8c}\,,
 \end{split}\end{equation*}
 where the second inequality uses that $\Delta\le 0.001n^{1/5}$
 %$\Delta\le 0.01n^{1/2}$
 implies $\Delta<c(\log\Delta)\Delta^{-1}\cdot|U\setminus Z_{i-1}|$, and the third inequality uses the logarithm estimate~\eqref{eq:log} with
 $x=4c(\log\Delta)\Delta^{-1}\le\frac12$.

 This shows that the $t=|Y^*|\ge\tfrac12|Y|\Delta^{-2}$ indicators of events $\cB(v,Y^*,i)$ that $\phi_1(N_H(y_i)\setminus\tilde X)\subset N_G(v)$ stochastically dominate a collection of $t$ independent Bernoulli random variables, each having success probability $\Delta^{-8c}$. By Chernoff's inequality (Lemma~\ref{lem:hypgeo}, with $\xi=\tfrac12$), we get that
 \[
 \Prob\big(\cB(v,Y^*)\big)=
 \Prob\Big(\big|\{i\colon \cB(v,Y^*,i) \text{ occurs}\}\big|\le\frac12 |Y^*|\Delta^{-8c}\Big)
 \le\exp\Big(-\frac{1}{12}\cdot \frac12|Y|\Delta^{-2}\Delta^{-8c}\Big)\,.\] 
 Taking the union bound over the at most $n$ choices of $v$ and the $\Delta^2$ choices of $Y^*$, the probability that there exists $(v,Y^*)$ such that $\cB(v,Y^*)$ occurs tends to zero, since $\Delta\le 0.001n^{1/5}$.
 Now, if no $\cB(v,Y^*)$ occurs, then for any given $v$, summing over the choices of parts~$Y^*$ we obtain that at least $\tfrac12\Delta^{-8c}|Y|\ge\Delta^{-\eps}|Y|$ vertices $y\in Y$ satisfy $\phi_1(N_H(y)\setminus\tilde X)\subseteq N_G(v)$, as required for~\ref{lem:stage1:b}.

 Next we prove that~\ref{lem:stage1:c} fails with probability tending to~$0$. Fix $V'\subset V$ with $|V'|=\lceil\tfrac12|Y|\Delta^{-3\eps/2-2}\rceil$ and $\delta_G(U,V')\ge\big(1-8c(\log\Delta)\Delta^{\eps-1}\big)|V'|$. Suppose that $V'$ witnesses the good event of~\ref{lem:stage1:c} failing. This means there is a set $Y'$ of size greater than $\tfrac14|Y|\Delta^{-2\eps}$ such that $N_G\big(\phi_1(N_H(y)\setminus\tilde X)\big)\cap V'=\emptyset$ for each $y\in Y'$. Choose a maximal subset $Y'_0$ of $Y'$ such that the sets $N_H(y)$ for $y\in Y'_0$ are pairwise disjoint. We have $|Y'_0|\big(\Delta(\Delta-1)+1\big)\ge|Y'|$ and so $|Y'_0|\ge\tfrac14|Y|\Delta^{-2\eps-2}$. Fix an arbitrary subset $Y''$ of~$Y'_0$ of size $t:=\lceil\tfrac14|Y|\Delta^{-2\eps-2}\rceil$.

 We now estimate the probability of the event $\cE(V',Y'')$ that $N_G\big(\phi_1(N_H(y)\setminus\tilde X)\big)\cap V'=\emptyset$ for each $y\in Y''$. For this, enumerate $Y''=\{y_1,\dots,y_t\}$ arbitrarily. As before, $\phi_1$ distributes the sets $N_H(y_i)\setminus\tilde{X}$ as the following random process: for each time $1\le i\le t$ in turn, choose a uniform random subset of $U\setminus Z_{i-1}$ of size $|N_H(y_i)\setminus\tilde{X}|$, where $Z_{i-1}$ is the set of vertices chosen at time up to $i-1$.
 Consider the bipartite graph $G[U\setminus Z_{i-1},V']$. We want to apply our main lemma, Lemma~\ref{lem:main}, to $G[U\setminus Z_{i-1},V']$. Hence, we next check the degree conditions of this lemma. 
 By choice of $V'$, every vertex of $U\setminus Z_{i-1}$ has at least $\big(1-8c(\log\Delta)\Delta^{\eps-1}\big)|V'|\ge\big(1-\frac1{16}\eps(\log\Delta)\Delta^{\eps-1}\big)|V'|$ neighbours in $V'$. Since $|Z_{i-1}|\le \Delta|Y''|\le\tfrac12|Y|\le\tfrac12|U|$, every vertex of $V'$ has at least $\big(1-2c(\log\Delta)\Delta^{-1}\big)|U|-|Z_{i-1}|\ge\big(1-4c(\log\Delta)\Delta^{-1}\big)|U\setminus Z_{i-1}|\ge \big(1-\frac1{16}\eps(\log\Delta)\Delta^{-1}\big)|U\setminus Z_{i-1}|$ neighbours in $U\setminus Z_{i-1}$. Hence, we can apply Lemma~\ref{lem:main}, which implies that, conditioned on the choice of $Z_{i-1}$, our random process chooses 
 a random subset of $U\setminus Z_{i-1}$ of size $|N_H(y_i)\setminus\tilde{X}|$ which has no
 common neighbours in $V'$ with probability at most
 $\exp\big(-\Delta^{1-2\eps}\big)$, where we use that $\Delta^{-\eps}|V'|\ge 0$.
 
 Multiplying these conditional probabilities, the probability of the event $\cE(V',Y'')$ is at most $\exp\big(-\Delta^{1-2\eps}t\big)$. We now take the union bound over the choice of $Y''$ and $V'$, which are of sizes $t$ and $\lceil\tfrac12|Y|\Delta^{-3\eps/2-2}\rceil\le 4\Delta^{\eps/2}t$ respectively, obtaining that the probability that there are $V'$ and $Y''$ such that $\cE(V',Y'')$ holds is at most
 \begin{multline*}
 \binom{n}{t}\binom{n}{4\Delta^{\eps/2}t}\exp\big(-\Delta^{1-2\eps}t\big)
 \le\Big(\frac{en}{t}\Big)^t\Big(\frac{en}{t}\Big)^{4\Delta^{\eps/2}t}\exp\big(-\Delta^{1-2\eps}t\big) \\
 \le\exp\big(8t\Delta^{\eps/2}\log(en/t)-\Delta^{1-2\eps}t\big)
 =\exp\Big(8t\Delta^{\eps/2}\big(\log(en/t)-\tfrac18\Delta^{1-5\eps/2}\big)\Big)
 \,,
 \end{multline*}
 and hence~\ref{lem:stage1:c} fails with at most that probability.
 Using that $|Y|\ge n/(\Delta+1)$ and hence $t=\lceil\tfrac14|Y|\Delta^{-2\eps-2}\rceil\ge\tfrac18n\Delta^{-2\eps-3}$, we see that this probability tends to~$0$ because
 \[\log(en/t)\le\log\Big(\frac{en}{\tfrac18n\Delta^{-2\eps-3}}\Big)=\log(8e\Delta^{3+2\eps})<\frac18\Delta^{1-5\eps/2}\]
 for $\eps=1/3$ and $\Delta\ge 2\cdot e^{1000}$.

 Finally we prove~\ref{lem:stage1:d}. Observe that $U\setminus \im\phi_1$ is distributed as a uniform random subset of $U$ of size $|\tilde X|$. If $\tilde X=\emptyset$, then~\ref{lem:stage1:d} is trivially true, so we suppose $|\tilde X|=\lceil|X|\Delta^{-2-3\eps}\rceil$ and $\Delta\le(\log(2n))^{1/(1-2\eps)}$ as in Setup~\ref{setup}. For a fixed $v\in V$, pick a superset $S$ of $U\setminus N_G(v)$ of size $2c(\log\Delta)\Delta^{-1}|U|$. Then $|S\cap (U\setminus \im\phi_1)|$ is hypergeometrically distributed with parameters $(|U|,2c(\log\Delta)\Delta^{-1}|U|,|\tilde X|)$, hence has expectation $2c(\log\Delta)\Delta^{-1}|\tilde X|$. So by Lemma~\ref{lem:hypgeo} with $\xi=\frac14$, the probability that $|S\cap (U\setminus \im\phi_1)|$ exceeds $\frac52c(\log\Delta)\Delta^{-1}|\tilde X|=\frac52c(\log\Delta)\Delta^{-1}|U\setminus\im\phi_1|$ is at most $2\exp\big(-\tfrac1{48}\cdot 2c(\log\Delta)\Delta^{-1}|\tilde X|\big)$. Taking the union bound over~$v$, the probability that there exists a vertex~$v$ with
 $|N_G(v) \cap (U\setminus \im\phi_1)|\le(1-3c(\log\Delta)\Delta^{-1})|U\setminus\im\phi_1|$
 tends to zero since $\Delta\le 0.001n^{1/5}$, as required.
\end{proof}

We next prove Lemma~\ref{lem:stage2} using the Lov\'asz local lemma. Recall that if $\Delta$ is large, there are no problematic vertices and the lemma holds trivially, so we can assume $\Delta$ is small and the reservoir satisfies $|\tilde{X}|\ge|X|\Delta^{-2-3\eps}.$ We simply re-embed uniformly at randomly the vertices $X'$ adjacent to problematic vertices in $Y$. Since these vertices were embedded uniformly at random in Stage~1, the reader can reasonably ask what we gain. The answer is that in Stage~1, we needed the failure probability to be reasonably small in order to make a union bound work with the failure probabilities for the other properties guaranteed by Lemma~\ref{lem:stage1}, whereas in this lemma we are only interested in problematic vertices and can afford to have a failure probability very close to, but strictly smaller than, $1$. It is in this step where we need the reservoir $\tilde{X}$: even conditioning on the embedding of all but the last few vertices of $X\setminus\tilde{X}$, because the reservoir is not embedded there are still many choices for these last few vertices. We then use this observation together with our main lemma to argue that it is unlikely that any given vertex that was problematic for $\phi_1$ remains problematic for $\phi_2$.

\begin{proof}[Proof of Lemma~\ref{lem:stage2}]
In the following we condition on the event that the embedding $\phi_1$ produced by Stage~1 satisfies the conclusion of Lemma~\ref{lem:stage1}, which happens with positive probability.
Recall that we can assume that $\Delta\le\big(\log(2n)\big)^{1/(1-2\eps)}$, as otherwise $\phi_2=\phi_1$ and Lemma~\ref{lem:stage1}\ref{lem:stage1:a} implies that that there are no $\Delta^{\eps}$-problematic vertices for~$\phi_2$. Also observe that every $y\in Y\setminus Y'$ is not $\Delta^\eps$-problematic, and hence not $\Delta^{2\eps}$-problematic, for $\phi_2$, since $y\notin Y_1$ and $\phi_2$ agrees with $\phi_1$ on $N_H(y)\setminus\tilde X$.

Recall that~$X'$ is the set of neighbours of $\Delta^{\eps}$-problematic vertices, and let $Y'$ be the vertices in $Y$ with a neighbour in $X'$. Let $\phi_2$ be obtained as described in Stage 2, i.e. by keeping vertices of $X_1=X\setminus(\tilde X\cup X')$ embedded as in $\phi_1$ and by choosing a uniform random injective map from $X'$ to $U\setminus\phi_1(X_1)$.
For each $y\in Y'$, let $\cE_y$ be the event that $y$ is $\Delta^{2\eps}$-problematic for $\phi_2$, i.e. that $\big|N_G\big(\phi_2(N_H(y)\setminus\tilde X)\big)\cap V\big|\le |V|\Delta^{-2\eps}$.
We would like to use Lemma~\ref{lem:LLLL} to show that the probability that no event $\cE_y$ with $y\in Y'$ occurs is positive. 
Let $p=\exp(-\Delta^{1-2\eps})$. We claim that the graph $D$ on vertex set $\{\cE_y\colon y\in Y'\}$
with edges $\cE_y\cE_{y'}$ for each pair $y,y'\in Y'$ of distinct vertices such that $N_H(y)\cap N_H(y')\neq\emptyset$
is a $p$-dependency graph. Observe that this claim, by Lemma~\ref{lem:LLLL}, completes the proof, since $D$ has maximum degree at most $\Delta^2$ and we have $4\Delta^2p<1$ since $\eps=1/3$ and $\Delta\ge 2e^{1000}$.

To verify the claim, consider $y\in Y'$, and let $S=N_H(y)\cap X_1$. Given a set of events not in $N_D(\cE_y)$, let $Y''$ be the corresponding set of vertices of $Y'$, and let $X''$ be the vertices in $X'$ with a neighbour in $Y''$. By definition of $D$, no vertex of $X''$ is adjacent to $y$. We need to show that $\Prob\big[\cE_y|\bigcap_{y''\in Y''}\bar\cE_{y''}\big]\le p$. Observe that the event $\bigcap_{y''\in Y''}\bar\cE_{y''}$ only depends on the embedding of~$X''$ by~$\phi_2$. Hence, it is sufficient to prove $\Prob[\cE_y|\phi_2(X'')]\le p$ for all choices of $\phi_2(X'')$. Observe further that, conditioning on $\phi_2(X'')$, the distribution of $N_H(y)\setminus(\tilde X\cup X_1)$ is a uniform random subset of $U\setminus(\phi_1(X_1)\cup\phi_2(X''))$ of size $|N_H(y)\setminus(\tilde X\cup X_1)|\le\Delta$ since~$y$ has no neighbours in~$X''$.
We want to apply Lemma~\ref{lem:main} with $A'=U\setminus\big(\phi_1(X_1)\cup\phi_2(X'')\big)$ and $B'=N_G\big(\phi_1(S)\big)\cap V$. 
Since we already assumed that $\Delta\le\big(\log(2n)\big)^{1/(1-2\eps)}$, therefore $|A'| \geq |\tilde X| >2\Delta$. Thus
we only need to check the degree conditions for this lemma.

We first verify the degree condition for vertices $u\in A'$. If $y\in Y_1$, then $S=\emptyset$ by the definition of $X_1$, and $|B'|=|V|$. Therefore, $u$ has at most $2c(\log\Delta)\Delta^{-1}|B'|$ non-neighbours in~$B'$ in this case and $2c\le\eps/16$, as required.
If $y\not\in Y_1$, then $y$ is not $\Delta^\eps$-problematic for $\phi_1$ and hence $|B'|=\big|N_G\big(\phi_1(S)\big)\cap V\big|\ge|V|\Delta^{-\eps}$. Therefore, in this case $u$ has at most 
$2c(\log\Delta)\Delta^{-1}|V|\le \frac{\eps}{16}(\log\Delta)\Delta^{\eps-1}|B'|$ non-neighbours in $B'$, giving the degree condition from~$A'$ to~$B'$.

For the degree condition from~$B'$ to~$A'$,
consider $v\in B'$. From Lemma~\ref{lem:stage1}\ref{lem:stage1:d} we know that~$v$ has at most $3c(\log\Delta)\Delta^{-1}|U\setminus\im\phi_1|$ non-neighbours in $U\setminus\im\phi_1$, and at most all of $\phi_1(X')$ are non-neighbours of $v$. By Lemma~\ref{lem:stage1}\ref{lem:stage1:a} we have $|X'|\le2\Delta\exp(-\Delta^{1-2\eps})|X|\le2 \Delta\exp(-\Delta^{1-2\eps})\Delta^{2+3\eps}|U\setminus\phi_1(X_1)|$ since $|\tilde X|\ge|X|/\Delta^{2+3\eps}$. Hence, the number of non-neighbours of~$v$ in $U\setminus\phi_1(X_1)$ is at most 
\begin{equation*}\begin{split}
3c\frac{\log\Delta}{\Delta}|U\setminus\im\phi_1|+|X'|
&\le 4c\frac{\log\Delta}{\Delta}|U\setminus\phi_1(X_1)|+
2\Delta^{3+3\eps}\exp(-\Delta^{1-2\eps})|U\setminus\phi_1(X_1)| \\
&\le
6c\frac{\log\Delta}{\Delta}\big|U\setminus\phi_1(X_1)\big|\,,
\end{split}
\end{equation*}
 where we use that $\Delta\ge 2\cdot e^{1000}$. At most all of these non-neighbours lie outside $\phi_2(X'')$, and since $|X''|\le|X'|\le 2\Delta\exp(-\Delta^{1-2\eps})|X|$, we see that $v$ has at most $8c(\log\Delta)\Delta^{-1}|A'|$ non-neighbours in $A'$. Since $8c\le\eps/16$, this verifies the degree condition from $B'$ to $A'$. 

Lemma~\ref{lem:main} applied to $A'=U\setminus\big(\phi_1(X_1)\cup\phi_2(X'')\big)$ and $B'=N_G\big(\phi_1(S)\big)\cap V$, for $T=\phi_2\big(N_H(y)\setminus(\tilde X\cup X_1)\big)$ tells us that, conditioned on $\phi_2(X'')$, the probability that $|N_G(T)\cap B'|$ is smaller than or equal to $\Delta^{-\eps}|B'|$ is at most
$\exp(-\Delta^{1-2\eps})=p$. However, if this event holds, then
\[\big|N_G\big(\phi_2(N_H(y)\setminus\tilde X)\big)\cap V\big|
= |N_G(T)\cap B'|
\le\Delta^{-\eps}|B'|\,.\]
Since $\Delta^{-2\eps}|V|\le \Delta^{-\eps}|B'|$, we conclude that, conditioning on $\phi_2(X'')$, 
the probability that $\cE_y$ holds
is at most $p$, as required.
\end{proof}

The proof of Lemma~\ref{lem:stage3} is now straightforward. We embed the reservoir vertices arbitrarily and just need to explain why the properties obtained in Lemmas~\ref{lem:stage1} and~\ref{lem:stage2} cannot degrade much. The basic reason for this is that we only embed at most one neighbour of any given $y\in Y$ in this stage, and the degree conditions we obtained provide bounds which are much larger than the number of non-neighbours of any given $u\in U$.

\begin{proof}[Proof of Lemma~\ref{lem:stage3}]
 If $\Delta>\big(\log(2n)\big)^{1/(1-2\eps)}$, then we chose $\tilde X=\emptyset$, and Lemma~\ref{lem:stage1} returns an embedding $\phi_1$ with the good properties of that lemma. Let $\phi_3=\phi_1$, and observe that $2\exp\big(-\Delta^{1-2\eps}\big)|X|<1$, so Lemma~\ref{lem:stage1}\ref{lem:stage1:a} states that $\phi_3$ has no $\Delta^\eps$-problematic vertices, and so also no $2\Delta^{2\eps}$-problematic vertices, establishing~\ref{lem:stage3:a}. The other two conclusions of Lemma~\ref{lem:stage3} are immediate from the corresponding statements of Lemma~\ref{lem:stage1}.

 We now suppose $\Delta\le\big(\log(2n)\big)^{1/(1-2\eps)}$.  Given embeddings $\phi_1$ and $\phi_2$ with the properties of Lemmas~\ref{lem:stage1} and~\ref{lem:stage2} respectively, we created $\phi_3$ by picking an arbitrary embedding of $\tilde X$ to $U\setminus\im\phi_2$.
 Any $y\in Y$ is not $\Delta^{2\eps}$-problematic for $\phi_2$ by Lemma~\ref{lem:stage2}. Since $\tilde X$ contains at most one neighbour of $y$, and since $\delta_G(U,V)\ge\big(1-2c(\log\Delta\big)\Delta^{-1})|V|$, this additional neighbour can decrease the common degree of previously embedded neighbours of~$y$ by at most $2c(\log\Delta)\Delta^{-1}|V|$. Thus we have
 \begin{align*}
    \big|N_G\big(\phi_3(N_H(y))\big)\cap V\big|&\ge\big|N_G\big(\phi_2(N_H(y)\setminus\tilde X)\big)\cap V\big|-2c(\log\Delta)\Delta^{-1}|V|\\
      &\ge |V|\Delta^{-2\eps}-2c(\log\Delta)\Delta^{-1}|V|
      \ge \tfrac12|V|\Delta^{-2\eps}\,,
 \end{align*}
 which shows that $y$ is not $2\Delta^{2\eps}$-problematic, establishing~\ref{lem:stage3:a}.

 Given $v\in V$, by Lemma~\ref{lem:stage1}\ref{lem:stage1:b} there are at least $|Y|\Delta^{-\eps}$ vertices $y\in Y$ such that $\phi_1\big(N_H(y)\setminus\tilde X\big)\subset N_G(v)$. By Lemma~\ref{lem:stage1}\ref{lem:stage1:a}, at most $2\Delta^2\exp(-\Delta^{1-2\eps})|X|$ of these vertices have a neighbour in $X'$, and a further at most $\Delta|\tilde X|$ have a neighbour in $\tilde X$, leaving at least
 \[|Y|\Delta^{-\eps}-2\Delta^2\exp(-\Delta^{1-2\eps})|X|-\Delta|\tilde X|\ge\frac{|Y|}{2\Delta^\eps}\]
 vertices $y$ which satisfy neither condition, where we used 
 $|X|\le(\Delta+1)|Y|$, $|\tilde X|\le2n/(\Delta^{2+3\eps})$ and
 $\Delta\ge 2\cdot e^{1000}$.  For such $y$ we have $\phi_3\big(N_H(y)\big)=\phi_1\big(N_H(y)\setminus\tilde X\big)\subset N_G(v)$, which proves~\ref{lem:stage3:b}.

 Finally, the proof of~\ref{lem:stage3:c} is analogous: given $V'$ satisfying the conditions of~\ref{lem:stage3:c}, by Lemma~\ref{lem:stage1}\ref{lem:stage1:c} there are at most $\tfrac14|Y|\Delta^{-2\eps}$ vertices $y$ of $Y$ such that $N_G\big(\phi_1(N_H(y)\setminus\tilde X)\big)\cap V'=\emptyset$, and at most a further $2\Delta^2\exp(-\Delta^{1-2\eps})|X|+\Delta|\tilde X|$ are added to this as $\phi_1$ is changed to $\phi_3$, which gives~\ref{lem:stage3:c}.
\end{proof}

\section{Concluding remarks}
\label{sec:concl}

We have shown that, for bipartite spanning graphs of maximum degree
$\Delta$, the Bollob\'as--Eldridge--Catlin minimum-degree threshold can
be improved by a logarithmic factor, and that this improvement is best
possible up to the value of the constant. It would be interesting to
understand how far beyond the bipartite setting this phenomenon extends.

The first natural question is whether the same logarithmic gain holds
for graphs of bounded chromatic number. More precisely, for every fixed
integer $r\ge2$, does there exist a constant $c_r>0$ such that every
$n$-vertex graph $G$ with
$
\delta(G)\ge
\Big(1-c_r\frac{\log\Delta}{\Delta}\Big)n
$
contains every $n$-vertex graph $H$ with maximum degree at most
$\Delta$ and chromatic number at most $r$, provided $\Delta$ is not too
large compared to $n$?

A still more speculative question is whether the bipartite assumption in our
main theorem can be replaced by the much weaker assumption that $H$ is
triangle-free, while still retaining a logarithmic improvement over the
Bollob\'as--Eldridge--Catlin threshold.

\bibliographystyle{amsplain_yk}
\bibliography{main}
\end{document}